\renewcommand{\div}{\operatorname{div}}
\newcommand{\curl}{\operatorname{curl}}
\newcommand{\bD}{\mathbf{D}}
\newtheorem{Theorem}{Theorem}[section]
\newtheorem{Lemma}{Lemma}[section]
\newtheorem{Remark}{Remark}[section]
\newtheorem{Definition}{Definition}[section]
\newtheorem{Assumption}{Assumption}[section]
\begin{document}

\title{nmma584}

\def\note#1{\marginpar{\small #1}}

\author{team}



\newcommand{\dashint}{-\!\!\!\!\!\!\int}
\newcommand{\qs}{q^\star}

\newcommand{\vv}{\mathcal V}
\newcommand{\eS}{\mathbb S}
\newcommand{\loc}{{loc}}
\newcommand{\eR}{\mathbf{R}}
\newcommand{\Rd}{{\eR}^d}
\newcommand{\Rdd}{{\eR}^{d\times d}}
\newcommand{\Rdsym}{{\eR}^{d\times d}_{sym}}
\newcommand{\eN}{\mathbf{N}}
\newcommand{\eZ}{\mathbf{Z}}
\newcommand{\nm}[1]{\left\|#1\right\|}
\newcommand{\dual}[1]{\left\langle#1\right\rangle}
\newcommand{\spt}{\operatorname{spt}}
\newcommand{\csubset}{\subset\subset}

\newcommand{\dd}{\mbox{d}}
\newcommand{\ltws}{local in time weak solution}
\setlength{\jot}{2.5ex}
\newcommand{\isigma}{s}
\renewcommand{\ss}{\tau}
\newcommand{\sss}{s}

\newcommand{\Du}{\bD v}
\newcommand{\du}{\nabla v}
\newcommand{\xxl}{\mathcal{X}_{\ell}}
\newcommand{\wwl}{\mathcal{W}_{\ell}}
\newcommand{\bbl}{\mathcal{B}_{\ell}}
\newcommand{\atl}{\mathcal{A}_{\ell}}
\newcommand{\etl}{\mathcal{E}_{\ell}}
\newcommand{\etr}{\mathcal{E}}
\newcommand{\ddt}{\frac{d}{dt}}
\newcommand{\dt}{\partial_t}
\newcommand{\norm}[3]{\|{#1}\|_{#2}^{#3}}
\newcommand{\dalka}[3]{\operatorname{dist}_{#3}(#1,#2)}
\newcommand{\eend}[1]{\hfill \ensuremath{\Box}}
\newcommand{\beggin}[1]{Proof. }
\def\dx{\; \mathrm{d}x}
\def\dy{\; \mathrm{d}y}
\def\diff{\mathsf{d}}
\def\diver{\mathop{\mathrm{div}}\nolimits}
\def\diam{\mathrm{diam}}

\def\beps{\mathbf{B}^\varepsilon}
\def\ueps{\mathbf{u}^\varepsilon}
\def\uepskl{\mathbf{u}^{k,l}}
\newcommand{\uk}{\boldu^k}
\newcommand{\bk}{\mathbf{B}^k}
\def\uepspr{\mathbf{u}^{\varepsilon'}}

\def\boldxi{\boldsymbol{\xi}}
\def\Seps{\mathbf{S}^\varepsilon}
\def\Sepsk{\mathbf{S}^k}
\def\GEps{g^\varepsilon}
\def\GEpsPr{g^{\varepsilon'}}
\def\Sepspr{\boldS^{\varepsilon'}}
\def\piepspro{\pi^{\varepsilon',1}}
\def\bpsi{\boldsymbol{\psi}}
\def\boldu{\mathbf{u}}
\def\boldg{\mathbf{g}}
\def\boldh{\mathbf{h}}
\def\boldv{\mathbf{v}}
\def\boldw{\mathbf{w}}
\def\boldA{\mathbf{A}}
\def\boldD{\mathbf{D}}
\def\boldF{\mathbf{F}}
\def\boldH{\mathbf{H}}
\def\boldI{\mathbf{I}}
\def\boldO{\mathbf{O}}
\def\boldP{\mathbf{P}}
\def\boldS{\mathbf{S}}
\def\boldV{\mathbf{V}}
\def\boldW{\mathbf{W}}
\def\boldeta{\boldsymbol{\eta}}
\def\WTSCon{\xrightharpoonup{2-s}}
\def\STSCon{\xrightarrow{2-s}}
\def\pieps{\pi^\varepsilon}
\newcommand{\pik}{\pi^k}
\def\piepspr{\pi^{\varepsilon'}}
\def\piepso{\pi^{\varepsilon,1}}
\def\piepsok{\pi^{k,1}}
\newcommand{\tcj}{\tilde c_1}


\title{Homogenization of an incompressible stationary flow of an~electrorheological fluid}

\author[a]{Miroslav Bul\'\i\v cek}
\author[b]{Martin Kalousek}
\author[b]{Petr Kaplick\'y}
\affil[a]{\footnotesize Mathematical Institute, Charles University in Prague, Sokolovsk\'{a}~83, 18675 Praha~8 - Karl\'{\i}n, Czech Republic}
\affil[b]{\footnotesize Department of Mathematical Analysis, Charles University in Prague, Sokolovsk\'{a}~83, 18675 Praha 8 - Karl\'{\i}n, Czech Republic}

\date{}
\maketitle

\begin{abstract}
We combine two scale convergence, theory of monotone operators and results on approximation of Sobolev functions by Lipschitz functions to prove a homogenization process for an incompressible flow of a generalized Newtonian fluid. We avoid the necessity of testing the weak formulation of the initial and homogenized systems by corresponding weak solutions, which allows mild assumptions on lower bound for a growth of the elliptic term.  We show that the stress tensor for homogenized problem depends on the symmetric part of the velocity gradient involving the limit of a sequence selected from a family of solutions of initial problems.\\
\noindent{\textit{Keywords}:} electrorheological fluids, Lipschitz truncation method, periodic homogenization, two-scale convergence\\
{\textit{2010 MSC}:} 35A01, 35D30, 35Q35, 76M50
\end{abstract}

\section{Introduction}
Electrorheological fluids are special liquids characterized by their ability to change significantly the mechanical properties when an electric field is applied. This behavior has been extensively investigated for the development of smart fluids, which are currently exploited in technological applications, e.g. brakes, clutches or shock absorbers. Results of the ongoing research indicate their possible applications also in electronics. One approach for modeling of the flow of electrorheological fluids is the utilization of a system of partial differential equations derived by Rajagopal and R\r{u}\v{z}i\v{c}ka, for details see \cite{RuzK}. This system in the case of an isothermal, homogeneous (with density equal to one), incompressible electrorheological fluid reads
\begin{equation}\label{ERNS}
	\begin{split}
		\partial_t\boldu -\div \boldS +\div(\boldu\otimes\boldu)+\nabla\pi=\mathbf{f},\quad
		\div \boldu=0,
	\end{split}
\end{equation}
in a domain $\Omega\subset\eR^d$, $d=2,3,\dots$. The symbol $\boldu$ denotes the velocity, $\boldS$ the extra stress tensor, $\div(\boldu\otimes\boldu)$ is the convective term with $\boldu\otimes\boldu$ denoting the tensor product of the vector $\boldu$ with itself defined as $(u_iu_j)_{i,j=1,\ldots,n}$, $\pi$ is the pressure and $\mathbf{f}$ the external body force. The stress tensor $\boldS$ is assumed to depend on the symmetric part $\boldD\boldu$ of the velocity gradient $\nabla\boldu$. The presence of an electric field is captured by the supposed dependence of $\boldS$ on the spatial variable in such a way that the growth of $\boldS$ corresponds to $|\boldD\boldu|^{p(\cdot)-1}$ for some variable exponent $p$. 

For this setting assuming additionally a periodic variable exponent with a small period $\varepsilon$, it was shown by Zhikov in \cite{Zhikov} that as $\varepsilon\rightarrow 0$ a subsequence of solutions of initial problems converges to a solution of the homogenized problem having the extra stress tensor independent of the spatial variable. Zhikov's approach is based on the fact that the regularity of solutions of the initial as well as homogenized problem allows to use these solutions as a test function. In fact, this sufficient regularity is ensured by the value of the lower bound for the variable exponent $p\geq p_0:=\max((d+\sqrt{3d^2+4d})/(d+2),3d/(d+2))$. 

In the seminal article \cite{FMS} a method of Lipschitz approximation of Sobolev functions is developed that allows to decrease the lower bound for $p$. In the article  \cite{FMS} the method is applied to the problem of existence of a weak solution to the  stationary generalized Navier-Stokes model. The stationary problem with elliptic operator with Orlicz growth is studied in \cite{DMS}. It took lot of work till the approach was modified in such a way that it is applicable to evolutionary problems. See \cite{DRW}, where the existence of a weak solution to the evolutionary generalized Navier Stokes problem is studied. The method is used to an evolutionary problem in Orlicz setting in the article \cite{BGMS}. The existence of a solutions to the problem \eqref{ERNS} can be shown if $p>2d/(d+2)$. It is natural to ask: ``Can one proceed with the homogenization process also if the lower bound for $p$ is between $p_0$ and $2d/(d+2)$?'' This paper should be regarded as the first step on the way for the answer to this question. To concentrate on the interplay between method of Lipschitz approximations and two scale convergence we start with the stationary problem first. 

Let us introduce the problem, which we deal with. The domain $\Omega\subset\Rd$, $d=2,3,\dots$ is supposed to be bounded and Lipschitz,  $Y=(0,1]^d$. For $\varepsilon\in(0,1)$ we consider the following stationary version of the problem~\eqref{ERNS}
\begin{equation}\label{ProbEps}
	\begin{gathered}
	-\div \left(\Seps(x,\boldD \ueps)-\ueps\otimes \ueps\right)+\nabla\pi^\varepsilon=-\div \boldF,\quad\div \ueps=0\quad\text{ in }\Omega,\\
	\ueps=0\quad\text{ on }\partial\Omega,\quad
	\int_\Omega \pi^\varepsilon=0.
      \end{gathered}
    \end{equation}
The function $\Seps$ is for any $x\in\eR^d$ and $\boldD\in\eR^{d\times d}_{sym}$ given by $\Seps(x,\boldD)=\boldS(x/\varepsilon,\boldD)$, where the tensor $\boldS:\Rd\times \Rdsym\rightarrow\Rdsym$ satisfies
\begin{Assumption}\label{Ass:S}
\leavevmode
\begin{enumerate}[label=(S\arabic*)]
	\item $\boldS$ is $Y-$periodic in the first variable, i.e., periodic in each argument $y_i, i=1,\ldots,d$ with the period 1, and continuous in the first variable,
	\item $\boldS$ is a Carath\'eodory function, i.e., $\boldS(\cdot,\boldxi)$ is measurable for all $\boldxi\in\Rdsym$, $\boldS(y,\cdot)$ is continuous for almost all $y\in\Rd$,
	\item \label{SStrMon} for $\boldxi_1,\boldxi_2\in\eR^{d\times d}_{sym}$, $\boldxi_1\neq\boldxi_2$ and a.a. $y\in\Rd$, $\left(\boldS(y,\boldxi_1)-\boldS(y,\boldxi_2)\right):\left(\boldxi_1-\boldxi_2\right)>0$,
	\item	\label{SCoercGrow} there are $p>1$, $p'=p/(p-1)$, $c_1, \tcj ,c_2>0$ that for all $y\in\eR^d$, $\boldxi\in\eR^{d\times s}_{sym}$
$$\boldS(y,\boldxi):\boldxi\geq c_1|\boldxi|^{p}-\tcj,\ |\boldS(y,\boldxi)|^{p'}\leq c_2(|\boldxi|^{p}+1).
$$

\end{enumerate}
\end{Assumption}

Typical example of a stress tensor satisfying Assumption~\ref{Ass:S} is 
$$
\boldS(y,\boldxi)=\alpha(y)(\delta+|\boldxi|^2)^{\frac{p-2}2}\boldxi+\beta(y)(\delta+|\boldxi|^2)^{\frac{\gamma(y)-2}2}\boldxi\quad\mbox{for $y\in\eR^d$, $\boldxi\in\eR^{d\times s}_{sym}$,}
$$
with $p>1$, $\delta\geq0$, $\alpha,\beta,\gamma:\Rd\to\eR$ periodic with respect to $Y$ and continuous. Moreover we assume that there exist $\alpha_0,\alpha_1,\beta_0,\beta_1, \gamma_0,\gamma_1\in\eR$ such that for all $y\in\Rd$, $0<\alpha_0\leq\alpha(y)\leq\alpha_1$, $0\leq\beta(y)\leq\beta_0$, $1\leq\gamma_0\leq\gamma(y)\leq\gamma_1<p$. It is motivated by the model derived  for a flow of the  electrorheological fluids by Rajagopal and R\r{u}\v{z}i\v{c}ka, for details see \cite{RuzK}, in particular \cite[Lemma~4.46]{RuzK}.

Our effort will be spent on determining the homogenized stress tensor. The situation is similar to the limit passage in the stress tensor in the proof of the existence of weak solutions of generalized Navier-Stokes equations. However, one cannot straightforwardly adopt the methods, which are successfully applied for existence proofs, because of oscillations which occur in the spatial variable of the stress tensor. We prove

\begin{Theorem}\label{Thm:Homog}
Let $\Omega\subset\Rd$ be a bounded Lipschitz domain, $p>2d/(d+2)$, $\boldS$ satisfy Assumption~\ref{Ass:S} and $\boldF\in L^{p'}(\Omega;\Rdsym)$. Let $\{(\ueps,\pieps)\}_{\varepsilon\in(0,1)}$ be a family of weak solutions of the system \eqref{ProbEps} constructed in Lemma~\ref{Thm:Exist}. Then there exists a sequence $\{\varepsilon_k\}_{k=1}^{+\infty}$ such that as $k\rightarrow +\infty$
\begin{alignat*}{2}
\varepsilon_k\to 0,\quad\boldu^{\varepsilon_k}\rightharpoonup \boldu \text{ in }W^{1,p}_0(\Omega;\Rd),\quad
	\pi^{\varepsilon_k}\rightharpoonup \pi \text{ in }L^s(\Omega),
\end{alignat*}
	where $s$ is determined in \eqref{Def:s} and $(\boldu,\pi)$ is a weak solution of the system 
\begin{equation}\label{ProbHom}
	\begin{gathered}
	-\div \left(\widehat{\boldS}(\boldD\boldu)-\boldu\otimes \boldu\right)+\nabla\pi= -\div \boldF\quad\text{ in }\Omega,\quad\div \boldu=0\quad\text{ in }\Omega,\\
	\boldu=0\quad\text{ on }\partial\Omega,\quad\int_\Omega \pi=0.
      \end{gathered}
    \end{equation}
	with $\hat\boldS$ given by \eqref{HSDef}.
\end{Theorem}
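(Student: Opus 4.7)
The plan is to combine two-scale convergence, oscillating test functions, and a Lipschitz-truncation monotonicity trick to identify the homogenized stress tensor. First I would record the uniform a~priori bounds guaranteed by Lemma~\ref{Thm:Exist}: Assumption~\ref{Ass:S}\ref{SCoercGrow} yields
$\|\ueps\|_{W^{1,p}_0}+\|\Seps(\cdot/\varepsilon,\bD\ueps)\|_{L^{p'}}+\|\pieps\|_{L^s}\lesssim 1$ independently of $\varepsilon$. Since $p>2d/(d+2)$, Sobolev embedding gives $W^{1,p}_0(\Omega)\hookrightarrow L^{2p'}(\Omega)$ compactly, and along a subsequence $\ueps\otimes\ueps\to\bu\otimes\bu$ strongly in $L^{p'}$. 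Two-scale compactness supplies $\varepsilon_k\to0$ and two-scale limits
\[
\ueps\rightharpoonup\bu\text{ in }W^{1,p}_0,\ \pieps\rightharpoonup\pi\text{ in }L^s,\ \bD\ueps\WTSCon\bD\bu(x)+\bD_y\bw(x,y),\ \Seps(\cdot/\varepsilon,\bD\ueps)\WTSCon\boldsymbol{\chi}(x,y),
\]
with $\bw\in L^p(\Omega;W^{1,p}_{\mathrm{per}}(Y;\Rd))$, $\div_y\bw=0$, and $\boldsymbol{\chi}\in L^{p'}(\Omega\times Y;\Rdsym)$.

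Next I would pass to the limit in the weak form of~\eqref{ProbEps} tested against admissible oscillating functions of the form $\bvarphi(x)+\varepsilon\bpsi(x,x/\varepsilon)$, with $\bvarphi\in C_c^\infty(\Omega;\Rd)$ solenoidal and $\bpsi$ smooth, compactly supported in $x$, $Y$-periodic and solenoidal in $y$. The strong convergence of the convective term together with the standard two-scale identities produces simultaneously the macroscopic equation
\[
-\div_x\int_Y\boldsymbol{\chi}(x,y)\,\diff y+\div(\bu\otimes\bu)+\nabla\pi=-\div\bF
\]
and the microscopic cell problem $-\div_y\boldsymbol{\chi}(x,\cdot)=0$ in $Y$ for a.a.\ $x\in\Omega$.

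The central obstacle is the identification $\boldsymbol{\chi}(x,y)=\bS(y,\bD\bu(x)+\bD_y\bw(x,y))$; because $p$ can be as small as just above $2d/(d+2)$, one cannot test the limit equation with $\bu$ itself, as was done in~\cite{Zhikov}. To bypass this I would employ the Lipschitz truncation method adapted to the two-scale framework: approximate $\bw$ by $\bw_\ell\in C_c^\infty(\Omega;C_{\mathrm{per}}^\infty(Y;\Rd))$ with $\div_y\bw_\ell=0$, set $\boldv^\varepsilon_\ell(x):=\bu(x)+\varepsilon\bw_\ell(x,x/\varepsilon)$, and for each level $\lambda>0$ construct a divergence-free Lipschitz approximation $\bvarphi^{\varepsilon,\lambda}$ of $\ueps-\boldv^\varepsilon_\ell$ satisfying $\|\nabla\bvarphi^{\varepsilon,\lambda}\|_{L^\infty}\lesssim\lambda$ and coinciding with $\ueps-\boldv^\varepsilon_\ell$ outside a bad set of controlled small measure. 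Inserting $\bvarphi^{\varepsilon,\lambda}$ into~\eqref{ProbEps}, handling the pressure through its splitting into components of different integrabilities to absorb the non-admissibility of the convective term, letting $\varepsilon\to0$, then $\lambda\to\infty$ via the standard bad-set estimate, and finally $\ell\to\infty$, yields
\[
\int_\Omega\int_Y\bigl(\boldsymbol{\chi}(x,y)-\bS(y,\bD\bu(x)+\bD_y\bw_\ell(x,y))\bigr):\bigl(\bD_y\bw(x,y)-\bD_y\bw_\ell(x,y)\bigr)\,\diff y\dx\ge 0.
\]
Minty's monotonicity trick, combined with the strict monotonicity~\ref{SStrMon} of $\bS$ and the strong convergence $\bw_\ell\to\bw$ in $W^{1,p}$, then identifies $\boldsymbol{\chi}$ pointwise. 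The cell problem, together with strict monotonicity and coercivity, uniquely determines a solenoidal corrector $\bw_{\boldxi}$ for each frozen $\boldxi\in\Rdsym$, and the prescription
\[
\widehat\bS(\boldxi):=\int_Y\bS(y,\boldxi+\bD_y\bw_{\boldxi}(y))\,\diff y
\]
matches~\eqref{HSDef}, closing the proof. The hard part will be carrying through the Lipschitz truncation with the oscillating test field $\boldv^\varepsilon_\ell$ in place of the usual smoothed $\bu$, in particular controlling the pressure decomposition and the bad-set terms uniformly in $\varepsilon$ and $\ell$.
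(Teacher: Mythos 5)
Your overall architecture (uniform bounds, two\-scale limits, pressure splitting, Lipschitz truncation, Minty's trick) matches the paper's, but one claim is false and one essential ingredient is missing. The claim that $W^{1,p}_0(\Omega)\hookrightarrow L^{2p'}(\Omega)$ compactly, so that $\ueps\otimes\ueps\to\bu\otimes\bu$ strongly in $L^{p'}$, fails precisely in the range the theorem is about: $p^*\geq 2p'$ is equivalent to $p\geq 3d/(d+2)$, whereas here $p$ may lie in $(2d/(d+2),3d/(d+2))$. For such $p$ the convective term is precompact only in $L^q$ for $q<p^*/2$, which can be strictly smaller than $p'$; this is exactly why the paper introduces the exponent $s$ in \eqref{Def:s} and splits the pressure into $\pi^{\varepsilon,1}$ (generated by $\Seps$, bounded in $L^{p'}$ but not precompact), $\pi^{\varepsilon,2}$ (generated by $\bF+\ueps\otimes\ueps$, precompact only below $s$) and a harmonic part. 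You do invoke such a splitting later, so the plan survives, but the opening assertion must go. More seriously, your ``standard bad-set estimate'' conceals the main difficulty: after inserting the truncation $\bvarphi^{\varepsilon,\lambda}$ of $\ueps-\boldv^\varepsilon_\ell$ into \eqref{StacWF}, the bad-set contribution is of the form $\lambda\,\|\Seps\|_{L^{p'}(B)}\,|B|^{1/p}$ with $|B|\lesssim\lambda^{-p}$, and making it vanish as $\lambda\to\infty$ \emph{uniformly in $\varepsilon$} requires equiintegrability of $|\Seps|^{p'}$ (and of $|\pi^{\varepsilon,1}|^{p'}$) uniformly in $\varepsilon$, which boundedness in $L^{p'}$ does not give. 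The paper manufactures this via Chacon's biting lemma (Lemma~\ref{Lem:ChacBit}, Lemma~\ref{Lem:Equiint}), at the price of proving the energy identity and the monotonicity inequality only on the biting sets $\tilde\Omega_n$ and removing the exceptional sets at the very end. Your proposal contains no substitute for this step. Finally, your concluding inequality is stated only against the approximations $\bw_\ell$ of the corrector you wish to identify; Minty's trick requires the inequality for an \emph{arbitrary} comparison field $\mathbf{U}\in L^p(\Omega\times Y;\Rdsym)$ (obtained in the paper for $\mathbf{U}\in L^p(\tilde\Omega_n;C_{per}(Y;\Rdsym))$ and then by density), so the argument must be run with a general $\mathbf{U}$, not just along $\bw_\ell\to\bw$.

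It is also worth noting that the paper implements the key energy-identity step by a different route than yours: instead of truncating the difference $\ueps-\boldv^\varepsilon_\ell$ (the classical existence-proof strategy of \cite{FMS,DMS}, which forces you to build divergence-free Lipschitz truncations of an oscillating field), it truncates $\uk$ itself, writes $\int\Sepsk:\bD\uk$ as a sum of a term supported on the small set $\{\uk\neq\uepskl\}$ (killed by the biting-lemma equiintegrability) and the term $\int(\Sepsk-\pi^{k,1}\mathbf{I}):\bD\uepskl$, and passes to the limit in the latter with the div--curl lemma (Lemma~\ref{Lem:DivCurl}), exploiting that $\div(\Sepsk-\pi^{k,1}\mathbf{I})$ is precompact in $W^{-1,q}$ thanks to the pressure decomposition. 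Your route is viable in principle but leaves the hardest estimates (uniform-in-$\varepsilon$ and -$\ell$ control of the bad set and of the pressure terms against the truncated oscillating test function) unaddressed, which is exactly where the paper's biting-set and div--curl machinery does the work.
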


We want to emphasize that in our setting we allow that $p<3d/(d+2)$ and so the term $\int_\Omega \boldu\otimes\boldu:\boldD\boldu\dx$ is not defined. In this situation we are not allowed to test weak formulations of the problems \eqref{ProbEps} and \eqref{ProbHom} by their weak solutions. We are not aware of any result on homogenization that would treat this situation.  

Let us outline the structure of the paper. In Section \ref{Sec:Prel} we introduce function spaces appearing in the paper, collect several useful lemmas and show some facts about two-scale convergence and the tensor $\hat\boldS$. The Section \ref{Sec:HomProc} is devoted to the homogenization process.
\section{Preliminaries}\label{Sec:Prel}

The following function spaces appear further: 
$C^\infty_{0,\div}(\Omega)=\left\{\boldu\in C^\infty_0(\Omega;\Rd):\div \boldu=0\text{ in }\Omega\right\}$, $C^\infty_{per}(Y)=\{u\in C^\infty(\Rd): u\ Y\text{-periodic}\}$, $C^\infty_{per,div}(Y)=\{\boldu\in C^\infty_{per}(\Rd): \div \boldu=0 \text{ in } Y\}$, $W^{1,p}_{per}(Y,\Rd)$ is a closure of $\{\boldu\in C^\infty_{per}(Y),\int_Y\boldu=0\}$ in the classical Sobolev norm,\\  $\mathcal{D}\left(\Omega;C^\infty_{per}(Y)\right)$ is the space of smooth functions $u:\Omega\times\Rd\rightarrow \eR$ such that $u(x,\cdot)\in C^\infty_{per}(Y)$ for any $x\in\Omega$ and there is $K\Subset\Omega$ such that for any $x\in\Omega\setminus K$: $u(x,\cdot)=0$ in $\Rd$.

We introduce a closed subspace of $L^{p}(Y;\Rdsym)$ and its annihilator in $L^{p'}(Y;\Rdsym)$ by
\begin{align*}
	&G(Y)=\left\{\boldD\boldw:\boldw\in W^{1,p}_{per}(Y;\Rd), \div\boldw=0 \text{ in } Y\right\},\\
	&G^\bot(Y)=\left\{\boldV^*\in L^{p'}(Y;\Rdsym):\forall \boldV\in G(Y)\ \int_Y \boldV^*(y)\cdot\boldV(y)\dy=0\right\}.
\end{align*}
Note that $C^\infty_{per,div}(Y)$ is dense in $G(Y)$. If we consider the set $\Rdsym$ as a subset of constant functions of $L^{p}(Y;\Rdsym)$ then $\Rdsym\cap G(Y)=\emptyset$.

For the sake of clarity, we recall the meaning of differential operators appearing in the paper. Let us consider $\boldu:\Omega\times Y\rightarrow\Rd$ then
\begin{equation*}
\nabla_x\boldu=\left(\frac{\partial u_i}{\partial x_j}\right)_{i,j=1}^d, \ \div_x\boldu=\sum_{i=1}^d\frac{\partial u_i}{\partial x_i},\ \nabla_y\boldu=\left(\frac{\partial u_i}{\partial y_j}\right)_{i,j=1}^d,\ \div_y\boldu=\sum_{i=1}^d\frac{\partial u_i}{\partial y_i}.
\end{equation*}
We omit the subscript if the function depends on the variable from one domain only. 
Throughout the paper the identity matrix is denoted by $\boldI$, the zero matrix by $\boldO$. The generic constants are denoted by $c$. When circumstances require it, we may also include quantities, on which the constant depend, e.g. $c(d)$ for the dependence on the dimension $d$. If we want to distinguish between different constants in one formula, we utilize subscripts, e.g. $c_1,c_2$ etc. 

Let $M,N$ be open subsets of $\Rd$. $M\Subset N$ means that $M\subset\overline{M}\subset N$, $\overline{M}$ being compact.

\subsection{Auxiliary tools}

\begin{Lemma}{\normalfont (Biting lemma, \cite{BM})}\label{Lem:ChacBit}
Let $E\subset\Rd$ be a bounded domain and $\{v^n\}$ be a sequence of functions bounded in $L^1(E)$. Then there exists a subsequence $\{v^{n_k}\}\subset\{v^n\}$, a function $v\in L^1(E)$ and a sequence of measurable sets $\{E_j\}, E\supseteq E_1\supseteq E_2\supseteq \cdots$ with $|E_j|\rightarrow 0$ as $j\to \infty$ such that for each $j$: $v^{n_k}\rightharpoonup v$ in $L^1(E\setminus E_j)$ as $k\rightarrow\infty$.
\end{Lemma}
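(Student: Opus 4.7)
My plan is to combine weak-$\ast$ compactness in the space of finite signed Radon measures, the Lebesgue decomposition, and the Dunford--Pettis theorem. Splitting $v^n = v^n_+ - v^n_-$, I view the non-negative measures $v^n_\pm\dx$ as elements of $M(\overline{E})\cong C(\overline{E})^\ast$; they are uniformly bounded in norm by $\sup_n \|v^n\|_{L^1(E)}$. Since $\overline{E}$ is compact metric, $C(\overline{E})$ is separable, and the sequential Banach--Alaoglu theorem provides a subsequence (relabelled) with $v^n_\pm\dx \rightharpoonup^\ast \nu_\pm$ in $M(\overline{E})$. Apply the Lebesgue decomposition $\nu_\pm = v_\pm\dx + \sigma_\pm$, with $v_\pm\in L^1(E)$ and $\sigma_\pm\perp\mathcal{L}^d$; set $v := v_+ - v_-\in L^1(E)$ as the candidate limit and pick a Borel null set $S\subset\overline{E}$ carrying $\sigma_++\sigma_-$.

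By outer regularity, choose open sets $U_j\supset S$ with $|U_j|<1/j$, arranged to satisfy $|\partial U_j|=0$ (e.g.\ via a coarea-selected thickening) and $U_j\searrow$ (after replacing $U_j$ by $\bigcap_{i\le j}U_i$). Define $E_j := U_j\cap E$; then $E_j$ is decreasing with $|E_j|\to 0$. The heart of the argument is equi-integrability of $\{v^n\}$ on $E\setminus E_j$. Fix $j$ and $\epsilon>0$. Absolute continuity of $\int|v|\dx$ furnishes $\delta>0$ with $\int_A|v|\dx<\epsilon/4$ whenever $|A|<\delta$. Given measurable $F\subset E\setminus E_j$ with $|F|<\delta$, outer regularity combined with Urysohn's lemma inside the open set $\mathbb{R}^d\setminus\overline{U_j}$ yields $\phi\in C(\overline{E};[0,1])$ with $\phi\ge\chi_F$, $\spt\phi\subset\overline{E}\setminus U_j$ and $|\spt\phi|<2\delta$. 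Since $\spt\phi$ is disjoint from $S\subset U_j$, weak-$\ast$ convergence gives
\[
\limsup_{n\to\infty}\int_F v^n_\pm\dx\le \int_{\overline{E}}\phi\,d\nu_\pm = \int \phi\, v_\pm\dx\le \int_{\spt\phi}|v|\dx<\epsilon/4,
\]
whence $\limsup_n\int_F|v^n|\dx<\epsilon/2$. Shrinking $\delta$ absorbs the finitely many initial indices, yielding equi-integrability on $E\setminus E_j$.

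Dunford--Pettis then provides weak $L^1(E\setminus E_j)$-compactness for every $j$; a diagonal extraction over $j$ produces a single subsequence $\{v^{n_k}\}$ with weak $L^1(E\setminus E_j)$ limits $w_j$. Because $\sigma_\pm$ is supported in $\overline{U_j}$, the restriction of the weak-$\ast$ limit to $E\setminus \overline{U_j}$ coincides with $v\dx$; testing against $\phi\in C(\overline{E})$ vanishing on $\overline{U_j}$ then forces $w_j = v$ a.e.\ on $E\setminus E_j$ (using $|\partial U_j|=0$ so that $E\setminus E_j$ and $E\setminus\overline{U_j}$ coincide up to a null set). The principal obstacle is the equi-integrability step: weak-$\ast$ convergence of $v^n\dx$ does not directly bound $\int_F|v^n|\dx$ for a measurable $F$, so one is forced to work with $v^n_\pm$ separately, to dominate each by continuous test functions whose supports avoid the singular set $S$, and to select the open cover $U_j$ carefully enough that its boundary is $\mathcal{L}^d$-negligible.
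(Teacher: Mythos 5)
There is a genuine gap in the equi-integrability step, and in fact the claim that step is meant to establish is false for your choice of $E_j$. Weak-$\ast$ convergence of $v^n_\pm\,\mathrm{d}x$ controls, for each \emph{fixed} set $F\subset E\setminus E_j$, only $\limsup_{n\to\infty}\int_F|v^n|$; the index beyond which $\int_F|v^n|$ is small depends on $F$, so ``shrinking $\delta$ to absorb the finitely many initial indices'' cannot yield a bound uniform in $n$ over all $F$ with $|F|<\delta$. Concretely, take $E=(0,1)$ and $v^n=n\chi_{A_n}$, where $A_n$ is a union of $n^2$ equally spaced intervals of length $n^{-3}$, so $|A_n|=1/n$ and $\|v^n\|_{L^1}=1$. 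Then $v^n\,\mathrm{d}x$ converges weakly-$\ast$ to Lebesgue measure on $(0,1)$: the singular parts $\sigma_\pm$ vanish, $S=\emptyset$, and your $E_j$ may be taken of arbitrarily small measure. Yet for any fixed $j$ and any subsequence, the sets $F_k=A_{n_k}\setminus E_j$ satisfy $|F_k|\to0$ while $\int_{F_k}|v^{n_k}|=n_k|A_{n_k}\setminus E_j|\to 1-|E_j|>0$ (equidistribution of $A_n$), so no subsequence is uniformly integrable on $E\setminus E_j$ and Dunford--Pettis cannot be invoked. The same example shows the identification of the limit is wrong: the density of the absolutely continuous part of the weak-$\ast$ limit is $v_+-v_-=1$, whereas the biting limit of any subsequence is $0$ (the correct bite sets are, e.g., $E_j=\bigcup_{k\ge j}A_{2^k}$ for the subsequence $n_k=2^k$). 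The moral: loss of uniform integrability through concentration on sets of vanishing measure need not leave any trace in the singular part of the measure limit, so the bite sets cannot be tied to a fixed Lebesgue-null set $S$; they must be constructed from the (sub)sequence itself.

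For comparison, the paper gives no proof at all — it quotes the lemma from Ball and Murat \cite{BM} — and the standard arguments proceed along the lines your construction misses: one truncates at levels $\lambda_k\uparrow\infty$, uses the $L^1$ bound to make the superlevel sets $\{|v^{n_k}|>\lambda_k\}$ (and unions of their tails, which serve as the $E_j$) small in measure, applies Dunford--Pettis (Lemma~\ref{Thm:Dun}) to the uniformly integrable truncated functions, and identifies $v$ as the limit of the truncations via a diagonal extraction. A smaller slip, moot in view of the above: the inequality $\int\phi\,v_\pm\,\mathrm{d}x\le\int_{\operatorname{spt}\phi}|v|\,\mathrm{d}x$ is false in general, since $v_\pm$ need not be dominated by $|v_+-v_-|$ (an oscillating sequence such as $v^n(x)=\sin(2\pi nx)$ gives $v_+=v_-=1/\pi$ but $v=0$); the correct comparison function there would be $v_++v_-$.
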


\begin{Lemma}{\normalfont (Dunford, \cite[Section III.2 Theorem 15]{DieUh})}\label{Thm:Dun}
Let $\Sigma\in\Rd$ be a measurable set. A subset $M$ of $L^1(\Sigma)$ is relatively weakly compact if and only if it is bounded and uniformly integrable, i.e., for any $\theta>0$ there is $\delta>0$ such that for any $f\in M$ and a measurable $K\subset\Sigma$ with $|K|<\delta$  we have $\int_K |f|<\theta$.
\end{Lemma}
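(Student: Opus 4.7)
Plan of proof. The statement is an equivalence, so I would prove the two implications separately. For necessity (weakly relatively compact implies bounded and uniformly integrable) boundedness is immediate from the uniform boundedness principle, viewing each $f\in M$ as the continuous linear functional $g\mapsto\int_\Sigma fg$ on $L^\infty(\Sigma)$; weak relative compactness gives pointwise boundedness on every $g\in L^\infty$, which upgrades to a uniform bound on $\nm{f}_{L^1}$. For uniform integrability I would argue by contradiction: if it failed, one could find $\theta>0$, a sequence $f_n\in M$, and measurable $K_n\subset\Sigma$ with $|K_n|\to 0$ and $\int_{K_n}|f_n|\geq\theta$. By the Eberlein-\v{S}mulian theorem, a subsequence converges weakly to some $f\in L^1(\Sigma)$, so the induced signed measures $\nu_n(A)=\int_A f_n$ converge setwise to $A\mapsto\int_A f$. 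The Vitali-Hahn-Saks theorem then forces the family $\{\nu_n\}$ to be uniformly absolutely continuous with respect to Lebesgue measure, contradicting $|K_n|\to 0$ combined with $|\nu_n(K_n)|\geq\theta$.

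For sufficiency, given an arbitrary sequence $\{f_n\}\subset M$ I would extract a weakly convergent subsequence through truncation plus a diagonal argument. For each $N\in\mathbb{N}$ set $f_n^{(N)}=f_n\chi_{\{|f_n|\leq N\}}$; on any subset of $\Sigma$ of finite measure this family is bounded in $L^q$ for every $q\in(1,\infty)$ with bounds independent of $n$, so reflexivity of $L^q$ provides weakly convergent subsequences. A diagonal procedure carried out simultaneously over $N$ and over an exhaustion of $\Sigma$ by sets of finite measure yields a single subsequence along which $f_n^{(N)}\rightharpoonup g^{(N)}$ weakly for every $N$. Uniform integrability delivers $\sup_n\nm{f_n-f_n^{(N)}}_{L^1}\to 0$ as $N\to\infty$, and this bound transfers by weak lower semicontinuity of the $L^1$ norm to $\nm{g^{(N)}-g^{(M)}}_{L^1}\to 0$, so $\{g^{(N)}\}$ is Cauchy in $L^1(\Sigma)$ with some limit $f$. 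Testing $f_n-f$ against an arbitrary $g\in L^\infty(\Sigma)$ and splitting the integrand according to the truncation level then gives $f_n\rightharpoonup f$ weakly in $L^1(\Sigma)$, proving relative weak sequential compactness, which by Eberlein-\v{S}mulian coincides with relative weak compactness.

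The main obstacle sits in the sufficiency direction, specifically in passing from weak convergence of truncations on subsets of finite measure to genuine weak $L^1$ convergence on all of $\Sigma$ against arbitrary test functions in $L^\infty$: uniform integrability controls the contribution of large pointwise values of $f_n$, but one also needs to verify that no mass escapes toward infinity in $\Sigma$, a tightness-type condition which can either be extracted from $L^1$ boundedness by a further refinement of the subsequence or sidestepped entirely by identifying $M$ with a bounded family of signed measures, invoking Banach-Alaoglu in the dual of $C_0(\Sigma)$, using uniform integrability to conclude that the weak-$\ast$ limit is absolutely continuous with respect to Lebesgue measure, and finally applying the Radon-Nikodym theorem to represent it by an $L^1$ density.
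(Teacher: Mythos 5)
The paper does not prove this lemma at all: it is imported verbatim from Diestel--Uhl \cite{DieUh}, so the only thing to measure your argument against is the classical proof, and your sketch is essentially that proof (uniform boundedness plus Vitali--Hahn--Saks for necessity; truncation, reflexivity of $L^q$, a diagonal extraction and Eberlein--\v{S}mulian for sufficiency). Two points need attention. First, in the necessity step you slide from $\int_{K_n}|f_n|\geq\theta$ to $|\nu_n(K_n)|\geq\theta$; these are not the same because of cancellation inside $K_n$. The standard repair is to split $K_n=K_n^{+}\cup K_n^{-}$ according to the sign of $f_n$, so that $|\nu_n(K_n^{+})|\geq\theta/2$ or $|\nu_n(K_n^{-})|\geq\theta/2$, and then Vitali--Hahn--Saks yields the contradiction exactly as you intend.

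Second, the ``tightness'' obstacle you isolate in the sufficiency direction is genuine precisely when $|\Sigma|=\infty$, and there the statement as literally written is in fact false: on $\Sigma=\eR$ the sequence $f_n=\chi_{[n,n+1]}$ is bounded and uniformly integrable in the stated $\theta$--$\delta$ sense, yet admits no weakly convergent subsequence in $L^1(\eR)$ (testing against each $\chi_{[m,m+1]}$ forces any weak limit to vanish, while testing against the constant $1$ forces its integral to equal $1$). Neither of your proposed remedies closes this gap: tightness cannot be extracted from $L^1$-boundedness by refining the subsequence, and the weak-$*$ limit of this sequence in $\left(C_0(\Sigma)\right)^{*}$ is $0$, which is absolutely continuous with an $L^1$ density but is not a weak $L^1$ limit of the $f_n$. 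Fortunately the issue is vacuous where the lemma is used: it is applied only with $\Sigma=\Omega_n\subset\Omega$ bounded, hence of finite measure, and for $|\Sigma|<\infty$ your truncation--diagonal argument runs on all of $\Sigma$ at once, with no exhaustion and no escaping mass. So either the hypothesis $|\Sigma|<\infty$ should be added (as in \cite{DieUh}, where the underlying measure is finite), or an explicit tightness condition must join boundedness and uniform integrability; with that amendment your proof is correct.
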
 

\begin{Lemma}{\normalfont \cite[Theorem~10.11]{FeiNov}}\label{Lem:BogOp}
Let $\Sigma\subset\Rd$ be a bounded Lipschitz domain, $q\in(1,\infty)$ and denote $L^q_0(\Sigma)=\{h\in L^q:\int_\Sigma h=0\}$. There exists a continuous linear operator $\mathcal{B}:L^q_0(\Sigma)\rightarrow W^{1,q}_0(\Sigma;\Rd)$ such that $\div \mathcal{B}h=h$ for any $h\in L^q_0(\Sigma)$.
\end{Lemma}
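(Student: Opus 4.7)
The plan is to construct $\mathcal{B}$ via Bogovski\u{\i}'s classical integral formula on domains star-shaped with respect to a ball, and then patch the local constructions together by a partition of unity in the general Lipschitz case. First I would treat the star-shaped case: assume $\Sigma$ is star-shaped with respect to a ball $B\Subset\Sigma$, fix $\omega\in C_c^\infty(B)$ with $\int_B\omega=1$, and for $h\in L^q_0(\Sigma)$ set
\begin{equation*}
(\mathcal{B}h)(x) := \int_\Sigma \bN(x,y)\,h(y)\,dy,
\end{equation*}
where $\bN(x,y)$ is Bogovski\u{\i}'s explicit vector-valued kernel obtained by integrating $\omega$ along the ray from $y$ through $x$ with a suitable radial weight. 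Differentiating under the integral sign and changing variables in the radial parameter yields the fundamental identity $\div\mathcal{B}h = h - \omega \int_\Sigma h$, so the hypothesis $\int_\Sigma h = 0$ gives $\div\mathcal{B}h = h$; the support geometry of the kernel together with the star-shapedness of $\Sigma$ forces $\mathcal{B}h$ to vanish in a neighborhood of $\partial\Sigma$, which takes care of the zero-trace requirement.

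The main obstacle is proving the continuity estimate $\|\nabla\mathcal{B}h\|_{L^q(\Sigma)} \le c\,\|h\|_{L^q(\Sigma)}$. Differentiating $\bN$ in $x$ produces a kernel whose principal part has the $|x-y|^{-d}$ singularity characteristic of a Calder\'on--Zygmund operator, plus a smooth lower-order remainder controlled by derivatives of $\omega$. I would verify the standard size, H\"older regularity and cancellation conditions explicitly from the structure of $\bN$, and then invoke $L^q$-boundedness of Calder\'on--Zygmund singular integrals for $q\in(1,\infty)$; the remainder is treated by Young's convolution inequality since its kernel is bounded. This is the only non-elementary ingredient; everything else reduces to the explicit form of the formula.

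For a general bounded Lipschitz domain $\Sigma$ I would cover $\overline\Sigma$ by finitely many open sets $U_1,\dots,U_N$ such that each $\Sigma_j := \Sigma\cap U_j$ is star-shaped with respect to some ball; the existence of such a cover is a standard consequence of the Lipschitz character of $\partial\Sigma$. Fixing a partition of unity $\{\varphi_j\}$ subordinate to $\{U_j\}$, I would decompose $h = \sum_j h_j$ with $h_j := \varphi_j h - g_j$, where the corrections $g_j$ are constructed iteratively, e.g.\ via star-shaped Bogovski\u{\i} operators on reference balls contained in the overlaps $\Sigma_j \cap \Sigma_{j+1}$, so that $\int_{\Sigma_j} h_j = 0$ for every $j$ and $\sum_j g_j = 0$. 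Summing the local operators $\mathcal{B}_j h_j$ extended by zero from $\Sigma_j$ to $\Sigma$ then produces a global continuous linear $\mathcal{B}:L^q_0(\Sigma)\to W^{1,q}_0(\Sigma;\Rd)$ satisfying $\div\mathcal{B}h=h$, with continuity constant depending only on $\Sigma$ and $q$.
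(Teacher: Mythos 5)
The paper does not prove this lemma at all: it is quoted verbatim from Feireisl--Novotn\'y \cite[Theorem~10.11]{FeiNov}, so there is no internal argument to compare against. Your outline is the standard Bogovski\u{\i} construction (explicit kernel on domains star-shaped with respect to a ball, Calder\'on--Zygmund theory for the gradient bound, partition of unity for the general Lipschitz case), which is essentially the proof given in the cited source and in Galdi's book; the overall architecture is sound.

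One step is misstated, though it is repairable. For a general $h\in L^q_0(\Sigma)$ whose support is all of $\Sigma$, the kernel's support geometry does \emph{not} force $\mathcal{B}h$ to vanish in a neighborhood of $\partial\Sigma$: the set where $\bN(x,\cdot)h$ can be nonzero is the union over $y\in\supp h$ of the convex hulls of $\{y\}$ with the ball $B$, and this fills out essentially all of $\Sigma$ when $\supp h$ reaches the boundary. The correct route to the zero-trace property is to first take $h\in C^\infty_0(\Sigma)$ with $\int_\Sigma h=0$; then $\supp h\Subset\Sigma$, the star hull of $\supp h$ and $B$ is compactly contained in $\Sigma$, so $\mathcal{B}h\in C^\infty_0(\Sigma;\Rd)$, and one concludes $\mathcal{B}h\in W^{1,q}_0(\Sigma;\Rd)$ for general $h$ by density of such $h$ in $L^q_0(\Sigma)$ together with the continuity estimate you prove via Calder\'on--Zygmund theory. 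With that adjustment your argument goes through.
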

\begin{Lemma}{\normalfont \cite[Theorem~10.21]{FeiNov}}\label{Lem:DivCurl}
\def\un{\mathbf{u}^n}
\def\boldu{\mathbf{u}}
\def\vn{\mathbf{v}^n}
\def\boldv{\mathbf{v}}
Let $\Sigma\subset \Rd$ be an open set, $p,q,r>1$. Assume 
\begin{equation*}
	\un\rightharpoonup \boldu\text{ in }L^p(\Sigma;\Rd), \vn\rightharpoonup \boldv\text{ in }L^{q}(\Sigma;\Rd) \mbox{ as $n\rightarrow\infty$ and $\frac{1}{p}+\frac{1}{q}<\frac{1}{r}\leq 1$}.
\end{equation*}
	In addition, let for a certain $s>1$ $\{\div \un\}$ be precompact in $\left(W^{1,s}_0(\Sigma;\Rd)\right)^*$, $\{\curl \vn\}=\{\nabla(\vn)-(\nabla(\vn))^T\}$ be precompact in $\left(W^{1,s}_0(\Sigma;\Rd)^d\right)^*$. Then 
	\begin{equation*}
		\un\cdot\vn\rightharpoonup \boldu\cdot\boldv\text{ in }L^r(\Sigma).
	\end{equation*}
\end{Lemma}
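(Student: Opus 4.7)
I would follow the classical Murat--Tartar div--curl argument, adapted to the $L^p$--$L^q$ setting via a Helmholtz decomposition on a ball together with $L^p$ elliptic regularity. A preliminary reduction handles the cross terms: writing $\boldu^n\cdot\boldv^n=(\boldu^n-\boldu)\cdot(\boldv^n-\boldv)+(\boldu^n-\boldu)\cdot\boldv+\boldu\cdot(\boldv^n-\boldv)+\boldu\cdot\boldv$, the two middle terms converge to $0$ weakly in $L^r(\Sigma)$ by weak--strong H\"older pairings (the exponents fit precisely because $\tfrac1p+\tfrac1q<\tfrac1r\le 1$), so after relabelling we may assume $\boldu=\boldv=0$. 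The precompactness assumption then upgrades to strong convergence $\diver\boldu^n\to 0$ and $\curl\boldv^n\to 0$ in $\bigl(W^{1,s}_0(\Sigma;\Rd)\bigr)^*$, since their distributional weak limits are forced to vanish. Localizing, it is enough to show $\int_\Sigma \boldu^n\cdot\boldv^n\,\varphi\to 0$ for every $\varphi\in C_0^\infty(\Sigma)$ with $\supp\varphi\subset B\Subset\Sigma$, $B$ an open ball.

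On $B$ I would solve $\Delta\psi^n=\diver\boldu^n$ with $\psi^n|_{\partial B}=0$. The $L^p$ elliptic estimate bounds $\nabla\psi^n$ in $L^p(B)$; elliptic regularity in the negative Sobolev scale turns the strong convergence of the right-hand side into $\nabla\psi^n\to 0$ strongly in $L^a(B)$ for some $a>1$, and interpolating with the $L^p$ bound yields $\nabla\psi^n\to 0$ strongly in $L^{q'}(B)$ (here $q'<p$ by $\tfrac1p+\tfrac1q<1$). Setting $\mathbf{z}^n:=\boldu^n-\nabla\psi^n$, one has $\diver\mathbf{z}^n=0$ on $B$ and $\mathbf{z}^n\rightharpoonup 0$ in $L^p(B)$, while the gradient part contributes $\int_B \nabla\psi^n\cdot\boldv^n\,\varphi\to 0$ by the weak--strong pairing $L^{q'}\times L^q$.

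For the divergence-free part, on the simply connected ball $B$ I would pick a vector potential $\boldA^n$ (a stream function in $d=2$, a Biot--Savart-type construction in $d=3$, or more generally an antisymmetric $2$-tensor inverting $\curl$ on divergence-free fields) with $\curl\boldA^n=\mathbf{z}^n$, $\diver\boldA^n=0$, and $\|\boldA^n\|_{W^{1,p}(B)}\le C\|\mathbf{z}^n\|_{L^p(B)}$. The identity $\curl(\boldA^n\varphi)=\varphi\mathbf{z}^n+\nabla\varphi\times\boldA^n$ together with integration by parts, interpreted as the duality pairing $\langle\curl\boldv^n,\boldA^n\varphi\rangle$, gives
\begin{equation*}
\int_B \mathbf{z}^n\cdot\boldv^n\,\varphi=\langle\curl\boldv^n,\boldA^n\varphi\rangle-\int_B \boldv^n\cdot(\nabla\varphi\times\boldA^n).
\end{equation*}
The first term tends to $0$ because $\curl\boldv^n\to 0$ in $\bigl(W^{1,s}_0\bigr)^*$ while $\boldA^n\varphi$ is uniformly bounded in $W^{1,s}_0(B)$. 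For the second, Rellich upgrades the $W^{1,p}(B)$-bound on $\boldA^n$ to strong convergence $\boldA^n\to 0$ in $L^{q'}(B)$, which pairs with $\boldv^n\rightharpoonup 0$ in $L^q(B)$.

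The main obstacle is the exponent bookkeeping: the precompactness of $\diver\boldu^n$ and $\curl\boldv^n$ is provided in the single dual space $\bigl(W^{1,s}_0\bigr)^*$, whereas the two weak convergences inhabit the genuinely different spaces $L^p$ and $L^q$. The condition $\tfrac1p+\tfrac1q<\tfrac1r\le 1$ is exactly the slack that lets $L^p$ elliptic regularity and Rellich compactness upgrade the strong convergence of $\nabla\psi^n$ and $\boldA^n$ to $L^{q'}$ (the Lebesgue dual of the space containing $\boldv^n$), closing both weak--strong pairings; the choice of $s$ in the hypothesis is absorbed by noting that $\bigl(W^{1,s}_0(B)\bigr)^*\hookrightarrow\bigl(W^{1,s'}_0(B)\hat\bigr)^*$ for $s'\ge s$ on the bounded ball $B$, so precompactness is preserved when passing to the exponents needed above.
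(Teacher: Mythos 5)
The paper does not actually prove this lemma --- it is imported verbatim from \cite[Theorem~10.21]{FeiNov} --- so your proposal can only be judged on its own terms. The strategy you follow (reduce to $\mathbf{u}=\mathbf{v}=0$, localize, Helmholtz-split $\mathbf{u}^n$ on a ball, kill the gradient part by elliptic regularity plus interpolation, and transfer $\curl$ onto a vector potential for the solenoidal part) is the standard Murat--Tartar route and is essentially the one behind the cited theorem. The reduction of the cross terms, the localization, and the treatment of $\nabla\psi^n$ are correct: the inequalities $q'<p$ and $p'<q$ are exactly equivalent to $\tfrac1p+\tfrac1q<1$, and enlarging the Sobolev exponent in $\bigl(W^{1,s}_0(B)\bigr)^*$ is legitimate there because it only weakens the dual norm in which you need strong convergence of the datum $\div\mathbf{u}^n$.

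There is, however, one genuine gap, and it sits precisely at the point you flag as ``the main obstacle.'' In the pairing $\langle\curl\mathbf{v}^n,\mathbf{A}^n\varphi\rangle$ you assert that $\mathbf{A}^n\varphi$ is uniformly bounded in $W^{1,s}_0(B)$; but your construction only gives $\|\mathbf{A}^n\|_{W^{1,p}(B)}\le C$, and the hypothesis allows $s>p$, in which case $W^{1,p}_0(B)\not\hookrightarrow W^{1,s}_0(B)$. The embedding you invoke in your closing remark, $\bigl(W^{1,s}_0(B)\bigr)^*\hookrightarrow\bigl(W^{1,\sigma}_0(B)\bigr)^*$ for $\sigma\ge s$, goes the \emph{wrong} way for this term: enlarging the dual space weakens the mode of convergence of $\curl\mathbf{v}^n$, whereas here you need to test it against functions that are \emph{less} regular than $W^{1,s}_0$. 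The missing ingredient is an interpolation in the negative scale: $\curl\mathbf{v}^n$ is bounded in $W^{-1,q}(B)=\bigl(W^{1,q'}_0(B)\bigr)^*$ because $\mathbf{v}^n$ is bounded in $L^q$, and it converges to $0$ strongly in $\bigl(W^{1,\sigma}_0(B)\bigr)^*$ for every $\sigma\ge s$; interpolating these two facts yields strong convergence to $0$ in $\bigl(W^{1,p}_0(B)\bigr)^*$, which is admissible because $p'<q$ --- again exactly the hypothesis $\tfrac1p+\tfrac1q<1$. With that step inserted the pairing closes, since $\mathbf{A}^n\varphi$ is bounded in $W^{1,p}_0(B)$. (Secondarily, the existence of $\mathbf{A}^n$ with $\curl\mathbf{A}^n=\mathbf{z}^n$ and the $W^{1,p}$ bound on a ball deserves a reference or a construction --- inverting via the Newtonian potential leaves a harmonic correction that must be controlled --- but this is standard and not a conceptual obstruction.)
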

The history of this lemma goes back to the works \cite{Murat} and \cite{Tartar}.

For $f\in L^1(\Rd)$, we define the Hardy-Littlewood maximal function as 
\begin{equation*}
(Mf)(x)=\sup_{r>0}\frac{1}{|B_r(x)|}\int_{B_r(x)}|f(y)|\dy,
\end{equation*}
where $B_r(x)$ stands for a ball having a center at $x$ and radius $r$.

\begin{Lemma}\label{Lem:LTr}
Let $\Omega\subset\Rd$ be open and bounded with a Lipschitz boundary and $\alpha\geq 1$. Then there is $c>0$ such that for any $\boldv\in W^{1,\alpha}_0(\Omega;\Rd)$ and every $\lambda>0$ there is $\boldv^\lambda\in W^{1,\infty}_0(\Omega;\Rd)$ satisfying
\begin{equation}\label{TruncProp}
	\begin{split}
		\|\boldv^{\lambda}\|_{W^{1,\infty}(\Omega)}&\leq \lambda,\\
		|\{x\in\Omega:\boldv(x)\neq\boldv^\lambda(x)\}|&\leq c\frac{\|\boldv\|_{W^{1,\alpha}(\Omega)}^\alpha}{\lambda^\alpha}.
	\end{split}
\end{equation}
\begin{proof}
	The similar assertion, formulated for functions that do not vanish on $\partial\Omega$, appeared in \cite{AcFu}. For our purposes we refer to \cite[Theorem 2.3]{DMS}, which for any $\boldv\in W^{1,\alpha}_0(\Omega;\Rd)$ and any numbers $\theta,\lambda>0$ ensures the existence of $\boldv_{\theta,\sigma}\in W^{1,\infty}_0(\Omega;\Rd)$ such that
	\begin{equation*}
		\|\boldv_{\theta,\sigma}\|_{L^\infty(\Omega)}\leq\theta,\ \|\nabla\boldv_{\theta,\sigma}\|_{L^\infty(\Omega)}\leq c(d,\Omega)\sigma
	\end{equation*}
	and up to a set of Lebesgue measure zero
	\begin{equation*}
		|\{\boldv_{\theta,\sigma}\neq\boldv\}|\subset\Omega\cap\left(\{M(\boldv)>\theta\}\cup\{M(\nabla\boldv)>\sigma\}\right).
	\end{equation*}
	We pick $\boldv\in W^{1,\alpha}_0(\Omega;\Rd)$ and $\lambda>0$. We apply \cite[Theorem 2.3]{DMS} with $\lambda,\frac{\lambda}{c(d,\Omega)}$ and denote $\boldv^\lambda=\boldv_{\lambda,\frac{\lambda}{c(d,\Omega)}}$ to conclude \eqref{TruncProp}$_1$. Moreover, since we have for any $f\in L^\alpha(\Rd)$ and $\sigma>0$
	\begin{equation*}
		|\{|f|>\sigma\}|\leq \int_{\Rd}\left(\frac{|f|}{\sigma}\right)^\alpha= \frac{\|f\|^\alpha_{L^\alpha(\Rd)}}{\sigma^\alpha},
	\end{equation*}
	we obtain for $\alpha>1$ using the strong type estimate for the maximal function, see \cite[Theorem 1]{Stein2}
	\begin{equation*}
			|\{\boldv_{\theta,\sigma}\neq\boldv\}|\leq \frac{\|M(\boldv)\|^\alpha_{L^\alpha(\Rd)}}{\lambda^\alpha}+c\frac{\|M(\nabla\boldv)\|^\alpha_{L^\alpha(\Rd)}}{\lambda^\alpha}\leq c\frac{\|\boldv\|^\alpha_{W^{1,\alpha}(\Omega)}}{\lambda}.
	\end{equation*}
	For $\alpha=1$ the estimate \eqref{TruncProp}$_2$ is a direct consequence of the weak type estimate of the maximal function, see again \cite[Theorem 1]{Stein2}.
\end{proof}
\end{Lemma}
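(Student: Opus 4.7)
The plan is to reduce the lemma to a known two-parameter Lipschitz approximation result, then couple the two parameters so that the full $W^{1,\infty}$-norm of the truncation is controlled by the single quantity $\lambda$, and finally estimate the exceptional set using standard maximal-function bounds.

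First I would invoke a Lipschitz truncation theorem of Acerbi--Fusco and Diening--M\'alek--Steinhauer type: for any $\boldv \in W^{1,\alpha}_0(\Omega;\Rd)$ and any parameters $\theta, \sigma > 0$, there exists $\boldv_{\theta,\sigma} \in W^{1,\infty}_0(\Omega;\Rd)$ satisfying $\|\boldv_{\theta,\sigma}\|_{L^\infty(\Omega)} \leq \theta$ and $\|\nabla \boldv_{\theta,\sigma}\|_{L^\infty(\Omega)} \leq c(d,\Omega)\sigma$, together with the pointwise inclusion (modulo a null set) $\{\boldv_{\theta,\sigma} \neq \boldv\} \subset \Omega \cap ( \{M\boldv > \theta\} \cup \{M(\nabla \boldv) > \sigma\} )$. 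The underlying construction is a Whitney/McShane extension of $\boldv$ from the ``good set'' where the maximal functions of $\boldv$ and $\nabla \boldv$ are simultaneously controlled; the zero boundary condition is preserved because, after extending $\boldv$ by zero to $\Rd$, both maximal functions blow up near $\partial\Omega$, so the truncation automatically vanishes there.

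Second, to match the present statement I would apply this result with the coupled choice $\theta = \lambda$ and $\sigma = \lambda/c(d,\Omega)$, and set $\boldv^\lambda := \boldv_{\theta, \sigma}$. The two pointwise bounds then combine into $\|\boldv^\lambda\|_{W^{1,\infty}(\Omega)} \leq \lambda$, which is \eqref{TruncProp}$_1$. For the measure estimate, when $\alpha > 1$ I would invoke the strong $(\alpha,\alpha)$ bound for the Hardy--Littlewood maximal operator together with Chebyshev's inequality, giving
\begin{equation*}
|\{Mf > \tau\}| \leq \frac{\|Mf\|_{L^\alpha(\Rd)}^\alpha}{\tau^\alpha} \leq c \frac{\|f\|_{L^\alpha(\Rd)}^\alpha}{\tau^\alpha}.
\end{equation*}
Applying this once to $f = \boldv$ with $\tau = \lambda$ and once to $f = \nabla \boldv$ with $\tau = \lambda/c(d,\Omega)$ and summing, one obtains \eqref{TruncProp}$_2$ with the constant absorbing $c(d,\Omega)^\alpha$. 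The endpoint case $\alpha = 1$, where the strong estimate fails, is handled directly by the weak-type $(1,1)$ inequality $|\{Mf > \tau\}| \leq c\|f\|_{L^1(\Rd)}/\tau$.

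The only genuine obstacle sits inside the cited truncation theorem: the Whitney-type extension producing a single function that equals $\boldv$ off a controllably small bad set, is globally Lipschitz, and still vanishes on $\partial\Omega$. Once that construction is taken as a black box, the present lemma is essentially bookkeeping---balancing the two truncation parameters proportionally and invoking standard maximal inequalities---with the mild caveat that one must separate the endpoint $\alpha = 1$ and use the weak-type rather than the strong-type estimate there.
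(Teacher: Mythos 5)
Your proposal is correct and follows essentially the same route as the paper: reduce to the two-parameter truncation of Diening--M\'alek--Steinhauer, couple the parameters as $\theta=\lambda$, $\sigma=\lambda/c(d,\Omega)$, and bound the exceptional set via Chebyshev with the strong-type maximal estimate for $\alpha>1$ and the weak-type estimate at $\alpha=1$. No substantive differences to report.
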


\subsection{Two-scale convergence} 
The following concept of convergence was introduced by Nguetseng in his seminal paper \cite{Nguetseng}: a sequence $\{u^\varepsilon\}$ bounded in $L^2(\Omega)$ is said weakly two-scale convergent to $u^0\in L^2(\Omega\times Y)$ if for any smooth function $\psi:\Rd\times \Rd\rightarrow \eR$, which is $Y-$periodic in the second argument,
\begin{equation}\label{NgWC}
	\lim_{\varepsilon\rightarrow 0}\int_{\Omega}u^\varepsilon(x)\psi\left(x,\frac{x}{\varepsilon}\right)\dx=\int_{\Omega\times Y}u^0(x,y)\psi(x,y)\dx\dy.
\end{equation}
Properties of this notion of convergence were investigated and applied to a number of problems, see \cite{Allaire}, and the concept was also extended to $L^p, p\geq 1$. It was shown later that there is an alternative approach, so called periodic unfolding, for the introduction of the weak two-scale convergence, which allows to represent the two-scale convergence by means of the standard weak convergence in a Lebesgue space on the product $\Omega\times Y$. In the same manner the strong two-scale convergence is introduced. Since it is known that both presented notions of the weak two-scale convergence are equivalent, see \cite{Visintin}, all properties known for the weak two-scale convergence introduced via \eqref{NgWC} hold also for the second approach. We introduce the weak two-scale convergence via periodic unfolding.
\begin{Definition}
	We define functions $n:\eR\rightarrow \eZ$, $r:\eR\rightarrow [0,1)$, $N:\Rd\rightarrow \eZ^d$ and $R:\Rd\rightarrow Y$ as 
	\begin{equation*}
		\begin{split}
			n(x)=\max\{n\in \eZ: n\leq x\},\  r(x)=x-n(x),\\
			N(x)=(n(x_1),\ldots, n(x_d)),\ R(x)=x-N(x).
		\end{split}
	\end{equation*}
	Then we have for any $x\in \Rd,\varepsilon>0$, a two-scale decomposition $x=\varepsilon\left(N\left(\frac{x}{\varepsilon}\right)+R\left(\frac{x}{\varepsilon}\right)\right)$.
	We also define for any $\varepsilon>0$ a two-scale composition function $T_\varepsilon:\Rd\times Y\rightarrow\Rd$ as $T_\varepsilon(x,y)=\varepsilon \left(N\left(\frac{x}{\varepsilon}\right)+y\right)$.
\end{Definition}

\begin{Remark}\label{Rem:TepsUC}
	It follows that $T_\varepsilon(x,y)\rightarrow x$ uniformly in $\Rd\times Y$ as $\varepsilon\rightarrow 0$ since $T_\varepsilon(x,y)=x+\varepsilon\left(y-R\left(\frac{x}{\varepsilon}\right)\right)$.
\end{Remark}

\begin{Definition}\label{Def:TSC}
We say that a sequence of functions $\{v^\varepsilon\}\subset L^r(\Rd)$ 
\begin{enumerate}
	\item converges to $v^0$ weakly two-scale in $L^r(\Rd\times Y)$, $v^\varepsilon\WTSCon v^0$, if $v^\varepsilon\circ T_\varepsilon$ converges to $v^0$ weakly in $L^r(\Rd\times Y)$,
	\item converges to $v^0$ strongly two-scale in $L^r(\Rd\times Y)$, $v^\varepsilon\STSCon v^0$, if $v^\varepsilon\circ T_\varepsilon$ converges to $v^0$ strongly in $L^r(\Rd\times Y)$.
\end{enumerate}

\end{Definition}
	
\begin{Remark}
We define two-scale convergence in $L^r(\Omega\times Y)$ as two-scale convergence in $L^r(\Rd\times Y)$ for functions extended by zero to $\Rd\setminus\Omega$.
\end{Remark}

\begin{Lemma}\label{Lem:Decomp}
	Let $g\in L^1(\Rd;C_{per}(Y))$. Then, for any $\varepsilon>0$, the function $(x,y)\mapsto g(T_\varepsilon(x,y),y)$ is integrable and 
	\begin{equation*}
		\int_{\Rd}g\left(x,\frac{x}{\varepsilon}\right)\dx=\int_{\Rd}\int_Y g(T_\varepsilon(x,y),y)\dy\dx.
	\end{equation*}
	\begin{proof}
		See \cite[Lemma~1.1]{Visintin}
	\end{proof}
\end{Lemma}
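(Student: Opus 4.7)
The plan is to exploit the fact that $T_\varepsilon(x,y)$ is piecewise constant in $x$ on the cells of the periodic grid $\varepsilon(k+Y)$, $k\in\eZ^d$. More precisely, whenever $x\in \varepsilon(k+Y)$ we have $N(x/\varepsilon)=k$, hence $T_\varepsilon(x,y)=\varepsilon(k+y)$ does not depend on $x$ inside that cell. The identity will then reduce to a straightforward change of variables on each cell, followed by summation in $k$, using the $Y$-periodicity of $g(\cdot,\cdot)$ in the second argument.

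First I would verify that the integrand on the right is measurable and integrable. Since $g\in L^1(\Rd;C_{per}(Y))$, for a.a.\ $x\in\Rd$ the map $y\mapsto g(x,y)$ is continuous and $Y$-periodic, and $x\mapsto \|g(x,\cdot)\|_{C(Y)}$ belongs to $L^1(\Rd)$. The map $(x,y)\mapsto T_\varepsilon(x,y)$ is Borel measurable, so $(x,y)\mapsto g(T_\varepsilon(x,y),y)$ is measurable on $\Rd\times Y$. Moreover, with $M(x):=\|g(x,\cdot)\|_{C(Y)}$, the bound $|g(T_\varepsilon(x,y),y)|\le M(T_\varepsilon(x,y))$ together with the change of variable on each cell (carried out next) gives a finite majorant for the double integral, justifying Fubini.

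Next I would compute both sides by decomposing $\Rd=\bigcup_{k\in\eZ^d}\varepsilon(k+Y)$ (a disjoint union up to a null set). For the left-hand side, on each cell substitute $z=x/\varepsilon-k\in Y$, so that $dx=\varepsilon^d\,dz$ and, using $Y$-periodicity of $g(\varepsilon(k+z),\cdot)$,
\begin{equation*}
\int_{\varepsilon(k+Y)}g\!\left(x,\tfrac{x}{\varepsilon}\right)dx
=\varepsilon^d\int_Y g(\varepsilon(k+z),k+z)\,dz
=\varepsilon^d\int_Y g(\varepsilon(k+z),z)\,dz.
\end{equation*}
For the right-hand side, since $T_\varepsilon(x,y)=\varepsilon(k+y)$ for every $x\in\varepsilon(k+Y)$,
\begin{equation*}
\int_{\varepsilon(k+Y)}\!\int_Y g(T_\varepsilon(x,y),y)\,dy\,dx
=\varepsilon^d\int_Y g(\varepsilon(k+y),y)\,dy.
\end{equation*}
The two expressions coincide; summing over $k\in\eZ^d$ yields the claimed identity, and the same cell-by-cell estimate with $M$ in place of $g$ shows the double integral is finite, so Fubini and the rearrangement of the series are legitimate.

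The only subtlety I anticipate is the measurability/integrability bookkeeping: $g$ is an $L^1$-valued function into the Banach space $C_{per}(Y)$, so one should really argue via its pointwise $Y$-periodic continuous representative (available by separability of $C_{per}(Y)$ and Pettis' theorem) before applying Fubini. Once that is settled, the computation above is essentially a periodic tiling argument, and no further analytical input (e.g.\ density or approximation by smooth $g$) is needed.
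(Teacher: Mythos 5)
Your proof is correct: the paper itself only cites \cite[Lemma~1.1]{Visintin}, and your cell-by-cell tiling argument (using that $T_\varepsilon(\cdot,y)$ is constant on each cell $\varepsilon(k+Y)$ up to a null set, changing variables on each cell, invoking $Y$-periodicity in the second argument, and summing with the majorant $M(x)=\|g(x,\cdot)\|_{C(Y)}$ to justify Fubini) is precisely the standard proof of that cited unfolding identity. Your closing remark about handling measurability via a jointly measurable representative of the Bochner-integrable $g$ addresses the only genuine subtlety.
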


\begin{Lemma}\label{Lem:Facts2S}
\def\boldUeps{\mathbf{U}^\varepsilon}
\def\boldU{\mathbf{U}}
\leavevmode
\begin{enumerate}[label=\roman*)]
	\item Let $v\in L^r(\Omega; C_{per} (Y)), r\in[1,\infty)$, $v$ be $Y-$periodic, define $v^\varepsilon(x)=v(\frac x\varepsilon,x)$ for $x\in\Omega$. Then $v^\varepsilon\STSCon v$ in $L^r(\Omega\times Y)$ as $\varepsilon\rightarrow 0$.
	\item Let $v^\varepsilon\WTSCon v^0$ in $L^r(\Omega\times Y)$ then $v^\varepsilon\rightharpoonup \int_Y v^0(\cdot,y)\dy$ in $L^r(\Omega)$.
	\item Let $\{v^\varepsilon\}$ be a bounded sequence in $L^r(\Omega), r\in(1,\infty)$. Then there is $v_0\in L^r(\Omega\times Y)$ and a sequence $\varepsilon_k\to0$ as $k\to+\infty$ such that $v^{\varepsilon_k}\WTSCon v_0$ in $L^r(\Omega\times Y)$ as $k\to+\infty$.
	\item Let $\{v^\varepsilon\}$ converge weakly to $v$ in $W^{1,r}(\Omega), r\in(1,\infty)$ as $\varepsilon\to0$. Then there is $v_0\in L^r(\Omega;W_{per}^{1,r}(Y))$ and a sequence $\varepsilon_k\to0$ as $k\to+\infty$ such that $v^{\varepsilon_k}$ converges strongly to $v$ in $L^r(\Omega)$ and $\nabla v^{\varepsilon_k}$ converges  weakly two-scale to $\nabla_x v+\nabla_y v_0$ in $L^r(\Omega\times Y)^d$ as $k\to+\infty$.
	\item Let $v^\varepsilon\WTSCon v^0$ in $L^r(\Omega\times Y)$ and $w^\varepsilon\STSCon w^0$ in $L^{r'}(\Omega\times Y)$ then $\int_\Omega v^\varepsilon w^\varepsilon\to\int_\Omega\int_Yv^0w^0$.
\end{enumerate}
\begin{proof} 
The equalities  
\begin{equation*}
	v^\varepsilon\circ T_\varepsilon(x,y)=v\left(T_{\varepsilon}(x,y),\frac{T_\varepsilon(x,y)}{\varepsilon}\right)=v(T_{\varepsilon}(x,y),y)
\end{equation*}
hold by definition of $T_\varepsilon$ and $Y-$periodicity of $v$. If $v\in C(\overline{\Omega\times Y})$, Remark~\ref{Rem:TepsUC} immediately implies
\begin{equation*}
	\int_{\Omega\times Y}\left|v(T_\varepsilon(x,y),y)-v(x,y)\right|^r\mathrm{d}x\mathrm{d}y\rightarrow 0\text{ as }\varepsilon\rightarrow 0.
\end{equation*}
For general $v\in L^r(\Omega;C_{per}(Y))$ we need to approximate $v$ by a continuous function and then proceed as in the proof of mean continuity of Lebesgue integrable functions. 

We obtain $(ii)$ if functions independent of $y$-variable are considered in the definition \eqref{NgWC} of the weak convergence in $L^r(\Omega\times Y)$. 

The assertion $(iii)$ is a direct consequence of Lemma~\ref{Lem:Decomp}, the weak compactness of bounded sets in $L^r(\Omega\times Y)$ and Definition \ref{Def:TSC}$_1$. 

For the proof of $(iv)$ with $r=2$ see \cite[Proposition~1.14.~(i)]{Allaire}, the proof for general $r\neq 2$ is analogous.

Statement $(v)$ follows immediately from definition of the weak and strong two-scale convergence and Lemma~\ref{Lem:Facts2S} applied to function $g=v^\varepsilon w^\varepsilon$ independent of $y$, see \cite[Proposition~1.4]{Visintin}.
\end{proof}
\end{Lemma}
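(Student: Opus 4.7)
The unifying idea of my proof plan is that, thanks to the unfolding definition of two-scale convergence (Definition~\ref{Def:TSC}) and the measure-preserving identity in Lemma~\ref{Lem:Decomp}, every statement about two-scale convergence of $\{v^\varepsilon\}$ reduces to a standard weak/strong convergence statement about the unfolded sequence $\{v^\varepsilon\circ T_\varepsilon\}$ in the fixed Bochner space $L^r(\Omega\times Y)$. Since $T_\varepsilon(x,y)\to x$ uniformly by Remark~\ref{Rem:TepsUC}, the unfolded sequences inherit very clean behaviour. I would prove the five items in the order (i), (ii), (iii), (v), (iv), so that the only nontrivial analytic step (extraction of the corrector in (iv)) can use the weaker items as building blocks.

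For (i), I would exploit $Y$-periodicity to rewrite $v^\varepsilon\circ T_\varepsilon(x,y)=v(T_\varepsilon(x,y),y)$, because $T_\varepsilon(x,y)/\varepsilon$ differs from $y$ by an element of $\mathbb Z^d$. If $v\in C(\overline{\Omega\times Y})$, uniform continuity plus Remark~\ref{Rem:TepsUC} gives the $L^r(\Omega\times Y)$-strong convergence to $v$ by dominated convergence. For the general case $v\in L^r(\Omega;C_{per}(Y))$, I would approximate $v$ by a continuous function in $L^r(\Omega;C_{per}(Y))$ and use a density/triangle-inequality argument as in the classical proof of mean continuity of $L^r$-functions, bounding the error uniformly in $\varepsilon$ by Lemma~\ref{Lem:Decomp} applied to $|v-v_{cont}|^r$. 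Item (ii) follows by plugging a test function $\psi(x,y)=\varphi(x)$ into the definition: $\int_\Omega v^\varepsilon \varphi\,\mathrm{d}x = \int_{\Omega\times Y}(v^\varepsilon\circ T_\varepsilon)\varphi \to \int_{\Omega\times Y}v^0\varphi$ by Lemma~\ref{Lem:Decomp} together with Definition~\ref{Def:TSC}(1). Item (iii) is immediate once Lemma~\ref{Lem:Decomp} is applied to $g(x,y)=|v^\varepsilon(x)|^r$, which shows $\|v^\varepsilon\circ T_\varepsilon\|_{L^r(\Omega\times Y)}=\|v^\varepsilon\|_{L^r(\Omega)}$; reflexivity of $L^r(\Omega\times Y)$ then yields a weakly convergent subsequence, and the limit is the required $v^0$. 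For (v), I would write
\begin{equation*}
\int_\Omega v^\varepsilon w^\varepsilon\,\mathrm{d}x = \int_{\Omega\times Y}(v^\varepsilon\circ T_\varepsilon)(w^\varepsilon\circ T_\varepsilon)\,\mathrm{d}x\,\mathrm{d}y
\end{equation*}
via Lemma~\ref{Lem:Decomp} and conclude by the standard weak$\times$strong pairing in the dual pair $L^r\times L^{r'}$ on $\Omega\times Y$.

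The main obstacle is (iv), the Nguetseng/Allaire decomposition for gradients. Here the unfolding alone does not identify the corrector $v_0$; the idea is first to use (iii) to extract a weak two-scale limit $\boldV\in L^r(\Omega\times Y;\mathbb{R}^d)$ of $\nabla v^{\varepsilon_k}$, and by (ii) to identify $\int_Y\boldV(\cdot,y)\,\mathrm{d}y=\nabla v$. It remains to show that $\boldV-\nabla_x v$ has the form $\nabla_y v_0$ with $v_0\in L^r(\Omega;W^{1,r}_{per}(Y))$. I would proceed by duality: test against smooth divergence-free (in $y$) periodic fields $\bpsi(x,y)$ with $\int_Y\bpsi(x,y)\,\mathrm{d}y=0$, use the approximation $\varepsilon^{-1}\bpsi(x,x/\varepsilon)$ against $v^{\varepsilon_k}-v$ combined with integration by parts in $x$, and pass to the limit to obtain that $\boldV-\nabla_x v$ is orthogonal in $L^r(\Omega\times Y)$ to every such $\bpsi$. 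A standard application of the annihilator characterisation of the space of $y$-gradients of zero-mean periodic $W^{1,r}$ functions then produces $v_0$. Finally, Rellich's compact embedding $W^{1,r}(\Omega)\hookrightarrow\hookrightarrow L^r(\Omega)$ (combined with the $L^r(\Omega)$ version of weak two-scale being the usual weak limit via (ii)) gives the strong $L^r(\Omega)$ convergence $v^{\varepsilon_k}\to v$. For the case $r\ne 2$ I would mimic the Hilbert argument of Allaire \cite[Prop.~1.14(i)]{Allaire} verbatim, with duality $(L^r)^*=L^{r'}$ replacing self-duality; this adjustment is routine but is the delicate step of the lemma.
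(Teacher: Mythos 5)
Your proposal is correct and follows essentially the same route as the paper: the unfolding identity of Lemma~\ref{Lem:Decomp} combined with Remark~\ref{Rem:TepsUC} and a density argument for (i)--(iii) and (v), and the Nguetseng--Allaire gradient decomposition for (iv), which the paper simply cites from \cite[Proposition~1.14(i)]{Allaire} while you sketch its proof. One cosmetic point: in (ii) Lemma~\ref{Lem:Decomp} unfolds the test function too, giving $\int_{\Omega\times Y}(v^\varepsilon\circ T_\varepsilon)(\varphi\circ T_\varepsilon)$ rather than $\int_{\Omega\times Y}(v^\varepsilon\circ T_\varepsilon)\varphi$, so you must also invoke $\varphi\circ T_\varepsilon\to\varphi$ strongly (e.g. by (i) or by uniform continuity), which is harmless.
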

\subsection{Properties of the homogenized stress tensor}
In accordance with \cite{Zhikov} we introduce a tensor $\hat{\boldS}:\Rdsym\rightarrow\Rdsym$ as 
\begin{equation}\label{HSDef}
\hat{\boldS}(\boldxi)=\int_Y \boldS(y,\boldxi+\boldV(y))\dy,
\end{equation}
where the function $\boldV$ is a solution of the cell problem: Let $\boldxi\in\Rdsym$ be fixed. We seek $\boldV\in G(Y)$ such that for any $\boldW\in G(Y)$
\begin{equation}\label{CProb}
\int_Y \boldS(y,\boldxi+\boldV(y)):\boldW(y)\dy=0.
\end{equation}
Since $G(Y)$ is reflexive and the tensor $\boldS$ is strictly monotone, the existence and uniqueness of $\boldV$ follows using the theory of monotone operators. In the next section we show that the tensor $\hat\boldS$ arises when the homogenization process $\varepsilon\rightarrow 0_+$ is performed in \eqref{ProbEps}. Properties of $\hat\boldS$ are listed in the following lemma.
\begin{Lemma}
	There are constants $\hat{c}_1,\hat{c}_2>0$ such that for any $\boldxi\in\Rdsym$
	\begin{equation}\label{HSPr}
		\begin{split}
			\hat{\boldS}(\boldxi)\cdot\boldxi\geq {c}_1 |\boldxi|^p-\hat{c}_1,\\
			|\hat{\boldS}(\boldxi))|^{p'}\leq \hat{c}_2 (|\boldxi|^p+1).
		\end{split}
	\end{equation}
	Moreover, $\hat{\boldS}$ is strictly monotone and continuous on $\Rdsym$.
	\begin{proof}
		Let $\boldV$ be a weak solution of the cell problem corresponding to $\boldxi$. Then using \ref{SCoercGrow}, Jensen's inequality and the fact that $\boldV$ is a symmetric gradient of $Y-$periodic function we obtain
		\begin{equation*}
			\begin{split}
				\hat\boldS(\boldxi)\cdot\boldxi&=\int_Y \boldS(y,\boldxi+\boldV(y))\dy\cdot\boldxi=\int_Y \boldS(y,\boldxi+\boldV(y)):(\boldxi+\boldV(y))\dy\\
				&\geq \int_Yc_1|\boldxi+\boldV(y)|^p\dy-\tcj\geq  c_1\left|\boldxi+\int_Y\boldV(y)\dy\right|^p-\tcj=c_1|\boldxi|^p-\tcj,
			\end{split}
		\end{equation*}
		which is \eqref{HSPr}$_1$.
		Using \ref{SCoercGrow}, the H\"older and Young inequalities yields
		\begin{equation*}
			\begin{split}
			|\hat{\boldS}(\boldxi)|^{p'}&=\left|\int_Y\boldS(y,\boldxi+\boldV(y))\dy\right|^{p'}\leq \int_Y|\boldS(y,\boldxi+\boldV(y))|^{p'}\dy\leq \int_Y c_2\big(|\boldxi+\boldV(y)|^p\dy+1\big)\\
			&\leq \int_Y\frac{c_2}{c_1}\boldS(y,\boldxi+\boldV(y))\cdot(\boldxi+\boldV(y))\dy+c_2(1+\frac{\tcj}{c_1})= \frac{c_2}{c_1}\hat\boldS(\boldxi)\cdot\boldxi+c_2(1+\frac{\tcj}{c_1})\\&\leq \frac{1}{2}|\hat\boldS(\boldxi)|^{p'}+\hat{c}_2(|\boldxi|^p+1),
			\end{split}
		\end{equation*}
for suitable (large) $\hat{c}_2>0$. Hence we obtain \eqref{HSPr}$_2$.
		Let $\boldxi^1,\boldxi^2\in\Rdsym$,$\boldxi^1\neq\boldxi^2$ and $\boldV^1,\boldV^2$ be corresponding weak solutions of the cell problem. Then we infer
		\begin{equation*}
			(\hat\boldS(\boldxi^1)-\hat\boldS(\boldxi^2)):(\boldxi^1-\boldxi^2)=\int_Y(\boldS(y,\boldxi^1+\boldV^1(y))-\boldS(y,\boldxi^2+\boldV^2)):(\boldxi^1+\boldV^1-\boldxi^2-\boldV^2)\dy.
		\end{equation*}
		The strict monotonicity of $\hat\boldS$ then follows since the integrand on the right hand side of the latter identity is positive due to \ref{SStrMon}. 

In order to obtain the continuity of $\hat\boldS$ we first show that the mapping $\boldxi\mapsto\boldS(\cdot,\boldxi+\boldV)$, where $\boldV$ is the solution to the corresponding cell problem,  is weakly continuous with values in $L^{p'}(Y;\Rdsym)$. Let us choose $\{\boldxi^n\}_{n=1}^{+\infty}$ such that $\boldxi^n\rightarrow\boldxi$ in $\Rdsym$ as $n\rightarrow+\infty$. Let $\{\boldV^n\}_{n=1}^{+\infty}\subset G(Y)$ be a sequence of solutions of corresponding cell problems and denote $\boldS^n:=\boldS(\cdot,\boldxi^n+\boldV^n)$. Since $\{\boldS^n\}_{n=1}^{+\infty}$ and $\{\boldV^n\}_{n=1}^{+\infty}$ are bounded in $L^{p'}(Y;\Rdsym)$ and $L^{p}(Y;\Rdsym)$ by \ref{SCoercGrow}, they contain weakly convergent subsequences in $L^{p'}(Y;\Rdsym)$ and $L^{p}(Y;\Rdsym)$. Let us assume without loss of generality that 
	\begin{equation}\label{BSNCon}
	\boldV^n\rightharpoonup \boldV^*\text{ in }L^{p}(Y;\Rdsym),\quad	\boldS^n\rightharpoonup \boldS^*\text{ in }L^{p'}(Y;\Rdsym)\text{ as }n\rightarrow+\infty.
	\end{equation}
		We show that $\boldV=\boldV^*$ and $\boldS^*=\boldS(\cdot,\boldxi+\boldV)$. As we have by the definition of the weak solution of the cell problem 
		\begin{equation}\label{eq:weak-cellV}
\forall \boldW\in G(Y):	\int_Y \boldS^n: \boldW=0,
		\end{equation}
		it follows from \eqref{BSNCon} that
		\begin{equation}\label{BSNLimIntPr}
			\lim_{n\rightarrow+\infty}\int_Y \boldS^n : \boldV^n=0=\int_Y\boldS^*:\boldV^*.
		\end{equation}
		Clearly, \ref{SStrMon} implies that 
		\begin{equation}\label{BSNMon}
\forall\boldW\in L^{p'}(Y;\Rdsym):			0\leq\int_Y \left(\boldS^n(y)-\boldS(y,\boldxi^n+\boldW(y))\right):(\boldV^n(y)-\boldW(y))\dy.
		\end{equation}
		We want to perform the limit passage $n\rightarrow+\infty$. Let us observe that for all fixed $\boldW\in L^{p}(Y;\Rdsym)$
		\begin{equation}\label{BSComStrongC}
			\boldS(\cdot,\boldxi^n+\boldW)\rightarrow\boldS(\cdot,\boldxi+\boldW)\text{ in }L^{p'}(Y;\Rdsym)\text{ as }n\rightarrow+\infty.
		\end{equation}
		Indeed, $\boldS(\cdot,\boldxi^n+\boldW)\rightarrow\boldS(\cdot,\boldxi+\boldW)$ a.e. in Y. Moreover, we have for $K\subset Y$ by \ref{SCoercGrow} that
		\begin{equation*}
			\begin{split}
			\int_K |\boldS(y,\boldxi^n+\boldW(y))-\boldS(y,\boldxi+\boldW(y))|^{p'}\dy&\leq \int_K c_2\left(|\boldxi^n+\boldW|^p+|\boldxi+\boldW|^p+2\right)\\
			&\leq c|K|(|\boldxi^n|^p+|\boldxi|^p+2)+c\int_K|\boldW|^p,
			\end{split}
		\end{equation*}
		which implies that $|\boldS(y,\boldxi^n+\boldW)-\boldS(y,\boldxi+\boldW)|^{p'}$ is equiintegrable and the Vitali convergence theorem yields \eqref{BSComStrongC}.
		Hence we obtain from \eqref{BSNMon} using \eqref{BSNLimIntPr} and \eqref{BSComStrongC} that
		\begin{equation*}
\forall\boldW\in L^{p'}(Y;\Rdsym):			0\leq\int_Y \left(\boldS^*(y)-\boldS(y,\boldxi+\boldW(y))\right):(\boldV^*(y)-\boldW(y))\dy.
		\end{equation*}
		Minty's trick gives that $\boldS^*(y)=\boldS(y,\boldxi+\boldV^*(y))$ a.e. in $Y$. Passing to the limit $n\to+\infty$ in \eqref{eq:weak-cellV} we get that $\boldV^*$ is a solution to the cell problem corresponding to $\boldxi$. Since this solution is unique we get $\boldV=\boldV^*$. Up to now we showed that from $\{\boldS^n\}_{n=1}^{+\infty}$ we can extract a subsequence weakly convergent towards $\boldS(\cdot,\boldxi+\boldV)$ in $L^{p'}(Y;\Rdsym)$.  Since this limit is unique, the whole sequence must converge to it. 
		
One easily obtains due to the weak continuity of $\boldxi\mapsto\boldS(\cdot,\boldxi+\boldV)$ 		for any $\boldeta\in\Rdsym$
		\begin{equation*}
			\left(\hat\boldS(\boldxi^n)-\hat\boldS(\boldxi)\right):\boldeta=\int_Y \left(\boldS(y,\boldxi^n+\boldV^n(y))-\boldS(y,\boldxi+\boldV(y))\right):\boldeta\dy\rightarrow 0
		\end{equation*}
as $n\to+\infty$. Since the space $\Rdsym$ is finite dimensional, we have the continuity of $\hat\boldS$.
	\end{proof}
\end{Lemma}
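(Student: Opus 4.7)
The plan is to peel off the four conclusions one at a time, exploiting the cell problem \eqref{CProb} and Assumption~\ref{Ass:S}. For the coercivity bound \eqref{HSPr}$_1$, I would use the solution $\boldV$ as its own test function in \eqref{CProb}, since $\boldV\in G(Y)$; this produces $\int_Y\boldS(y,\boldxi+\boldV):\boldV\dy=0$, so $\hat\boldS(\boldxi):\boldxi=\int_Y\boldS(y,\boldxi+\boldV):(\boldxi+\boldV)\dy$. Then \ref{SCoercGrow} plus Jensen's inequality, combined with $\int_Y\boldV=\boldO$ (because $\boldV$ is the symmetric gradient of a $Y$-periodic function of zero mean), delivers $c_1|\boldxi|^p-\tcj$.

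The growth bound \eqref{HSPr}$_2$ follows by similar bookkeeping. Jensen gives $|\hat\boldS(\boldxi)|^{p'}\le\int_Y|\boldS(y,\boldxi+\boldV)|^{p'}\dy$; the upper bound from \ref{SCoercGrow} reduces the problem to estimating $\int_Y|\boldxi+\boldV|^p\dy$, and substituting $c_1|\boldxi+\boldV|^p\le\boldS(y,\boldxi+\boldV):(\boldxi+\boldV)+\tcj$ from coercivity collapses this, after integration, to $c_1^{-1}\hat\boldS(\boldxi):\boldxi$ plus a constant. A Young's inequality $\hat\boldS(\boldxi):\boldxi\le\tfrac12|\hat\boldS(\boldxi)|^{p'}+C|\boldxi|^p$ then absorbs half of $|\hat\boldS(\boldxi)|^{p'}$ into the left-hand side.

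Strict monotonicity is again a direct consequence of testing. For $\boldxi^1\neq\boldxi^2$ with solutions $\boldV^1,\boldV^2\in G(Y)$, I would plug $\boldV^1-\boldV^2\in G(Y)$ into both cell problems to obtain $\int_Y\boldS(y,\boldxi^i+\boldV^i):(\boldV^1-\boldV^2)\dy=0$ for $i=1,2$, and then rearrange $(\hat\boldS(\boldxi^1)-\hat\boldS(\boldxi^2)):(\boldxi^1-\boldxi^2)$ into the form $\int_Y[\boldS(y,\boldxi^1+\boldV^1)-\boldS(y,\boldxi^2+\boldV^2)]:[(\boldxi^1+\boldV^1)-(\boldxi^2+\boldV^2)]\dy$. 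By \ref{SStrMon} this integrand is nonnegative and vanishes only where $\boldxi^1+\boldV^1=\boldxi^2+\boldV^2$; averaging over $Y$ would contradict $\boldxi^1\neq\boldxi^2$.

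Continuity is the real obstacle, because the map $\boldxi\mapsto\boldV$ from data to cell-problem solution is not manifestly continuous and the nonlinear $\boldS$ does not commute with weak limits. The plan is to apply a Minty-style argument. Given $\boldxi^n\to\boldxi$ with associated solutions $\boldV^n$ and $\boldS^n:=\boldS(\cdot,\boldxi^n+\boldV^n)$, the growth bounds from \ref{SCoercGrow} yield uniform $L^p$ and $L^{p'}$ estimates, so I can extract weak limits $\boldV^n\rightharpoonup\boldV^*$ in $L^p(Y;\Rdsym)$ and $\boldS^n\rightharpoonup\boldS^*$ in $L^{p'}(Y;\Rdsym)$. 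Using $\boldV^n$ itself as a test function in \eqref{CProb} and passing to the limit will give $\int_Y\boldS^*:\boldV^*=0$. For any fixed $\boldW\in L^p(Y;\Rdsym)$, the monotonicity inequality $\int_Y(\boldS^n-\boldS(\cdot,\boldxi^n+\boldW)):(\boldV^n-\boldW)\dy\ge0$ can be passed to the limit once one verifies $\boldS(\cdot,\boldxi^n+\boldW)\to\boldS(\cdot,\boldxi+\boldW)$ \emph{strongly} in $L^{p'}$, which follows from a.e.\ convergence (continuity of $\boldS(y,\cdot)$) together with equiintegrability from \ref{SCoercGrow} via Vitali. Minty's trick then identifies $\boldS^*=\boldS(\cdot,\boldxi+\boldV^*)$ a.e.; passing to the limit in \eqref{CProb} shows $\boldV^*$ solves the cell problem for $\boldxi$, and uniqueness of that solution forces $\boldV^*=\boldV$. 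Subsequence uniqueness upgrades this to convergence of the full sequence, so $\hat\boldS(\boldxi^n)=\int_Y\boldS^n\dy\rightharpoonup\int_Y\boldS(\cdot,\boldxi+\boldV)\dy=\hat\boldS(\boldxi)$ in $\Rdsym$; since $\Rdsym$ is finite-dimensional, this yields the claimed continuity.
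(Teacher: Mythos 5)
Your proposal is correct and follows essentially the same route as the paper: testing the cell problem with $\boldV$ itself for the coercivity and growth bounds (via Jensen, the coercivity substitution, and Young's absorption), rewriting the monotonicity difference as an integral of the monotone increment, and a Minty-type identification with Vitali's theorem for the continuity. The only difference is that you spell out more carefully why the monotonicity integral cannot vanish (averaging $\boldxi^1+\boldV^1=\boldxi^2+\boldV^2$ over $Y$ contradicts $\boldxi^1\neq\boldxi^2$), a point the paper leaves implicit.
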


\section{Proof of the main theorem}\label{Sec:HomProc}
Let us outline next steps. First, we observe that for fixed $\varepsilon\in(0,1)$ there is a weak solution of the problem \eqref{ProbEps}, which is bounded uniformly with respect to $\varepsilon$. This uniform boundedness implies the existence of a sequence $\{\boldu^{\varepsilon_k}\}_{k=1}^{+\infty}$ that converges weakly in $W^{1,p}(\Omega;\Rd)$ to a limit $\boldu$. We can also assume that the sequence $\{\boldS^{\varepsilon_k}\}_{k=1}^{+\infty}$ converges weakly in $L^{p'}(\Omega)$ to a limit $\bar\boldS$. To be able to use the Minty trick to identify the limit function $\bar\boldS$ we need to derive 
\begin{equation}\label{LimProdIdent}
	\lim_{k\rightarrow+\infty}\int_\Omega\boldS^{\varepsilon_k}:\boldD \boldu^{\varepsilon_k}=\int_\Omega \bar\boldS:\boldD \boldu.
\end{equation}
The assumption $p\geq \frac{2d}{d+2}$ ensures the precompactness of the term $\ueps\otimes\ueps$ in $L^{q}(\Omega;\Rdsym)$ for some $q$ that might be less than $p'$. Then one cannot test the weak formulation involving the limit $\bar\boldS$ with the limit $\boldu$ to obtain \eqref{LimProdIdent}. Actually in this situation we are not allowed to test \eqref{ProbEps} with $\ueps$ as well. To overcome this inconvenience we decompose the pressure $\pieps$ into three parts. The first one corresponds to $\Seps$ and is bounded in $L^{p'}$, the second one corresponds to $\boldF+\ueps\otimes\ueps$ and is precompact in $L^q$ for any $q\in(1,s)$, where $s$ might be less than $p'$ and is determined by \eqref{Def:s} and the last part is harmonic. Then we employ the div-curl lemma to obtain the identity of the type \eqref{LimProdIdent}. In fact, we show \eqref{LimProdIdent} for certain subsets of $\Omega$ determined by Biting lemma \ref{Lem:ChacBit}.

Let us begin with the existence of a weak solution of \eqref{ProbEps}.
\begin{Lemma}\label{Thm:Exist}
Let $\Omega\subset\Rd$ be a bounded Lipschitz domain, $\varepsilon>0$ be fixed, $\boldF\in L^{p'}(\Omega;\Rdsym)$, $p>2d/(d+2)$, Assumption~\ref{Ass:S} be fulfilled and $s$ be determined by
\begin{equation}\label{Def:s}
	s=
	\begin{cases}
		\min\left\{\frac{dp}{2(d-p)},p'\right\}&p<d,\\
		p' & p\geq d.
	\end{cases}
\end{equation}
Then there exists a weak solution $(\ueps,\pi^\varepsilon)$ of \eqref{ProbEps}, which is a pair $(\ueps,\pieps)\in W^{1,p}_{0,\div}(\Omega;\Rd) \times L^s(\Omega)$ such that for any $\boldw\in C^\infty_{0}(\Omega;\Rd)$
	\begin{equation}\label{StacWF}
		\int_\Omega (\Seps-\ueps\otimes \ueps-\pieps\boldI):\boldD\boldw=\int_\Omega \boldF: \boldD\boldw.
	\end{equation}

 Moreover, there is $c>0$ independent of $\varepsilon$ such that
	\begin{equation}\label{AprEst}
		\begin{split}
			\|\boldD\ueps\|_{L^p(\Omega)}&\leq c,\\
			\|\pi^\varepsilon\|_{L^s(\Omega)}&\leq c.
		\end{split}
	\end{equation}
\begin{proof}
	Due to Assumptions~\ref{Ass:S} we can adopt the technique used for the proof in \cite[Theorem~3.1]{DMS}.
\end{proof}
\end{Lemma}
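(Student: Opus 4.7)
The plan follows the stationary existence scheme of \cite{FMS, DMS}, adapted to our $x$-dependent stress $\Seps$; I fix $\varepsilon>0$ throughout. I would construct approximate velocities $\boldu_n \in W^{1,p}_{0,\diver}(\Omega;\Rd)$ via a Galerkin scheme in a countable basis of that space, solving each finite-dimensional nonlinear system by Brouwer's theorem with the help of \ref{SCoercGrow}. Because the basis is divergence-free, the pressure drops out and the convective cancellation $\int_\Omega(\boldu_n\otimes\boldu_n):\nabla\boldu_n=0$ is available. Testing with $\boldu_n$ itself and applying Young's inequality thus yields the uniform estimate $\|\boldu_n\|_{W^{1,p}}\le c$ independent of $\varepsilon$. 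Sobolev embedding then bounds $\boldu_n\otimes\boldu_n$ in $L^s(\Omega)$ for $s$ as in \eqref{Def:s} (using $\tfrac{dp}{2(d-p)}>1$ when $p<d$, which is exactly the threshold $p>2d/(d+2)$), and \ref{SCoercGrow} bounds $\Seps(\cdot,\bD\boldu_n)$ in $L^{p'}$.

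Pressure reconstruction proceeds through De Rham once the velocity is in hand: for each $n$, I would define $\pi_n$ of zero mean by inverting $\nabla$ on the distributional residual, and naturally split $\pi_n = \pi_n^1 + \pi_n^2$ where $\pi_n^1$ is associated with $\Seps(\cdot,\bD\boldu_n)$ (bounded in $L^{p'}$ via the dual estimate obtained from Lemma \ref{Lem:BogOp}) and $\pi_n^2$ is associated with $\boldF-\boldu_n\otimes\boldu_n$ (bounded in $L^s$). Summed, this gives \eqref{AprEst}$_2$. Extracting subsequences I obtain $\boldu_n\rightharpoonup\ueps$ in $W^{1,p}$, strongly and a.e.\ in $L^p$ by Rellich, $\Seps(\cdot,\bD\boldu_n)\rightharpoonup\bar\boldS$ in $L^{p'}$, and weak limits of the two pressure pieces. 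Since $p>2d/(d+2)$, Vitali delivers strong convergence of $\boldu_n\otimes\boldu_n$ in some $L^q$, $q>1$, so the convective term passes to the limit routinely; moreover $\pi_n^2$ itself converges strongly in $L^q$ as the solution of a Laplace-type problem with strongly convergent right hand side.

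The only genuinely nontrivial step, and the main obstacle, is identifying $\bar\boldS = \Seps(\cdot,\bD\ueps)$, since $p>2d/(d+2)$ permits $p<3d/(d+2)$ and then $\ueps$ is not an admissible test function. I would apply the Lipschitz truncation of Lemma \ref{Lem:LTr} to $\boldw_n := \boldu_n - \ueps$ at level $\lambda$, producing $\boldw_n^\lambda$ with $\|\boldw_n^\lambda\|_{W^{1,\infty}} \le \lambda$ and $|\{\boldw_n^\lambda \neq \boldw_n\}| \le c\lambda^{-p}$. Testing the difference of the equation for $\boldu_n$ and the weak form of the candidate limit equation (with $\bar\boldS$ and limit pressure) against $\boldw_n^\lambda$, the convective and forcing contributions vanish as $n\to\infty$ by strong convergence, the piece $\int_\Omega \pi_n^2 \diver\boldw_n^\lambda\dx$ is tamed by strong convergence of $\pi_n^2$, and $\int_\Omega \pi_n^1 \diver\boldw_n^\lambda\dx$ is controlled by combining the $L^{p'}$ bound on $\pi_n^1$ with the $L^\infty$ bound on $\diver\boldw_n^\lambda$ and smallness of the bad set. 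Invoking the biting lemma \ref{Lem:ChacBit} to handle the possibly non-uniformly-integrable product $\Seps(\cdot,\bD\boldu_n):\bD\boldu_n$, passing first $n\to\infty$ and then $\lambda\to\infty$, I would arrive at
\begin{equation*}
\limsup_{n\to\infty}\int_{E_j}\bigl(\Seps(\cdot,\bD\boldu_n)-\Seps(\cdot,\bD\ueps)\bigr):(\bD\boldu_n-\bD\ueps)\dx = 0
\end{equation*}
on biting sets $E_j\uparrow\Omega$. Minty's trick with the strict monotonicity \ref{SStrMon} then identifies $\bar\boldS = \Seps(\cdot,\bD\ueps)$ a.e., completing the proof. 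The delicate point is the bookkeeping around the non-divergence-free Lipschitz truncation; the pressure split into $\pi_n^1$ (controllable only in $L^{p'}$ but paired with $\diver\boldw_n^\lambda\in L^\infty$) and $\pi_n^2$ (compact in $L^q$) is exactly what absorbs this difficulty.
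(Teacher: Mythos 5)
Your proposal is precisely the technique the paper itself invokes: its entire proof of this lemma is the sentence ``adopt the technique of \cite[Theorem~3.1]{DMS}'', and what you have written out --- approximation, uniform energy estimate via the convective cancellation, pressure reconstruction with the splitting into an $L^{p'}$-bounded part dual to $\Seps$ and a compact part dual to $\boldF-\boldu_n\otimes\boldu_n$, Lipschitz truncation of $\boldu_n-\ueps$, biting lemma, Minty --- is a faithful reconstruction of that argument, including the correct arithmetic $\tfrac{dp}{2(d-p)}>1\iff p>\tfrac{2d}{d+2}$ behind \eqref{Def:s}.

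One step as written would fail and needs the standard repair. You propose to test ``the equation for $\boldu_n$'' with the Lipschitz truncation $\boldw_n^\lambda$, but in a pure Galerkin scheme the $n$-th equation holds only against test functions in the $n$-dimensional ansatz space, and $\boldw_n^\lambda\in W^{1,\infty}_0(\Omega;\Rd)$ is not in that space. This is exactly why \cite{DMS} (and \cite{FMS}) do not approximate by Galerkin alone: they add a vanishing higher-order regularization $\tfrac1n|\boldD\boldu|^{q-2}\boldD\boldu$ with $q$ large (at least $q\geq\max\{p',3d/(d+2)\}$, say), so that the approximate problem is solvable by monotone-operator theory in $W^{1,q}_{0,\div}$ and its weak formulation holds against \emph{all} test functions in $W^{1,q}_0\supset W^{1,\infty}_0$; the extra term is then harmless in the limit because $\tfrac1n\|\boldD\boldu_n\|_{q}^{q-1}\|\boldD\boldw_n^\lambda\|_{q}\leq c\,n^{-1/q}\lambda\to0$. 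With that modification (and the accompanying remark that the truncation is not solenoidal, so the pressure must indeed be carried along exactly as you describe, pairing $\pi_n^1$ with $\div\boldw_n^\lambda$, which is supported on the bad set of measure $\leq c\lambda^{-p}$ where equiintegrability from the biting sets applies), your argument closes and coincides with the paper's intended proof. Note also that all constants in \ref{SCoercGrow} are independent of $\varepsilon$ and $\|\Seps(\cdot,\boldD)\|$ inherits the bounds of $\boldS$, which is why \eqref{AprEst} is uniform in $\varepsilon$ --- worth stating explicitly since that uniformity is the point of the lemma.
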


At this moment we decompose the pressure into a part that is bounded in $L^{p'}$, a part that is precompact but in a bigger space and a harmonic part.
\begin{Lemma}\label{Lem:PressComp}
Let $s$ be given by \eqref{Def:s} and the functions $\ueps,\pi^\varepsilon,\Seps,\boldF$ be extended by zero on $\Rd\setminus\Omega$ for $\varepsilon>0$. Let functions $\pi^{\varepsilon,1}\in L^{p'}(\Rd),\pi^{\varepsilon,2}\in L^\frac{p^*}{2}(\Rd),\pi^{\varepsilon,3}\in L^{p'}(\Omega)$ be defined as
\newcommand{\NN}{{\cal N}}
\begin{align*}
\pi^{\varepsilon,1}&=\div\div \NN\left(\Seps\right),\\
\pi^{\varepsilon,2}&=-\div\div \NN\left(\boldF+\ueps\otimes\ueps\right),\\
\pi^{\varepsilon,3}&=\pi^\varepsilon-\pi^{\varepsilon,1}-\pi^{\varepsilon,2}.
\end{align*}
Here $\NN\left(\Seps\right)$ denotes the componentwise Newton potential of $\Seps$ and $p^*=dp/(d-p)$ if $d>2$ and $p^*>1$ if $d=2$.
Then 
	\begin{equation}\label{PressProp}
	\begin{split}
		&\{\pi^{\varepsilon,1}\}\text{ is bounded in } L^{p'}(\Rd),\\
		&\{\pi^{\varepsilon,2}\}\text{ is precompact in } L^{q}(\Rd)\text{ for any }q\in[1,s),\\
		&\{\pi^{\varepsilon,3}\}\text{ is precompact in } L^{p'}(O)\text{ for any }O\Subset\Omega.
		\end{split}
	\end{equation}
\begin{proof}
Applying the theory of Calderon-Zygmund operators, see \cite[Section 6.3]{DHHR}, yields the estimates
\begin{equation}\label{PiEps1Est}
	\|\pi^{\varepsilon,1}\|_{L^{p'}(\Rd)}\leq c\|\Seps\|_{L^{p'}(\Rd)},\quad \|\pi^{\varepsilon,2}\|_{L^{s}(\Rd)}\leq c,
\end{equation} 
and the precompactness of $\{\pi^{\varepsilon,2}\}$ in $L^q(\Rd), q\in [1,s)$ since $\{\boldF+\ueps\otimes\ueps\}$ is precompact in $L^q(\Rd;\Rdsym)$ by \eqref{WConvcs}$_2$. It follows from \eqref{StacWF} and \eqref{AprEst} that $\{\pi^{\varepsilon,3}\}_{\varepsilon\in(0,1)}$ are harmonic functions in $\Omega$ and bounded in $L^s(\Omega)$. Hence we can extract from $\{\pi^{\varepsilon,3}\}_{\varepsilon\in(0,1)}$ a subsequence that converges uniformly in any $O\Subset\Omega$. Thus $\{\pi^{\varepsilon,3}\}_{\varepsilon\in(0,1)}$ is precompact in $L^{p'}(O)$.
\end{proof}
\end{Lemma}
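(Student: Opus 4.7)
The plan is to treat the three pieces separately, exploiting that $\pi^{\varepsilon,1}$ and $\pi^{\varepsilon,2}$ are second-order Newton-potential expressions amenable to Calder\'on--Zygmund theory on all of $\Rd$, while $\pi^{\varepsilon,3}$ is forced to be harmonic in $\Omega$ by the momentum equation and therefore enjoys strong interior compactness for free.

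For $\pi^{\varepsilon,1}$, I would use that $\div\div\NN$ is a matrix of second-order Riesz transforms, hence bounded on $L^r(\Rd)$ for every $r\in(1,\infty)$. This gives $\|\pi^{\varepsilon,1}\|_{L^{p'}(\Rd)}\leq c\|\Seps\|_{L^{p'}(\Rd)}$, and the right-hand side is uniformly controlled by the growth condition \ref{SCoercGrow} combined with the bound \eqref{AprEst}$_1$. For $\pi^{\varepsilon,2}$ the same operator-theoretic estimate yields $\|\pi^{\varepsilon,2}\|_{L^q(\Rd)}\leq c\|\boldF+\ueps\otimes\ueps\|_{L^q(\Rd)}$ for all $q\in(1,\infty)$. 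The assumption $p>2d/(d+2)$ forces $p^*/2>1$, so that Rellich--Kondrachov applied to the $W^{1,p}$-bounded family $\{\ueps\}$ yields a subsequence converging strongly in $L^r(\Omega)$ for every $r<p^*$; this makes $\ueps\otimes\ueps$ precompact in $L^{r/2}(\Omega)$. Together with the definition \eqref{Def:s} of $s$, this gives precompactness of $\boldF+\ueps\otimes\ueps$ in $L^q(\Rd)$ for each $q\in[1,s)$, and the continuity of $\div\div\NN$ then transfers the precompactness to $\pi^{\varepsilon,2}$.

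For $\pi^{\varepsilon,3}$ I would take the distributional divergence of \eqref{StacWF} inside $\Omega$ to obtain $\Delta\pi^\varepsilon=\div\div\Seps-\div\div(\boldF+\ueps\otimes\ueps)$. Since the Newton potential satisfies the standard identity $-\Delta\NN(f)=f$ on distributions, direct computation shows $\Delta\pi^{\varepsilon,1}=\div\div\Seps$ and $\Delta\pi^{\varepsilon,2}=-\div\div(\boldF+\ueps\otimes\ueps)$ on $\Rd$, so subtracting identifies $\pi^{\varepsilon,3}$ as a harmonic function in $\Omega$. Since $\{\pi^{\varepsilon,3}\}$ is uniformly bounded in $L^s(\Omega)$ (combining \eqref{AprEst}$_2$ with the two bounds just obtained), the interior mean-value estimates for harmonic functions produce uniform control of every derivative on any $O\Subset\Omega$. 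A standard Arzel\`a--Ascoli argument then extracts a subsequence converging uniformly on $O$, which is in particular precompactness in $L^{p'}(O)$.

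The subtlest step is the harmonicity identification for $\pi^{\varepsilon,3}$: one has to apply the Newton-potential identity carefully to the tensor-valued inputs extended by zero outside $\Omega$, and to verify that the scalar pressure $\pi^\varepsilon$ constructed from the weak formulation of \eqref{ProbEps} genuinely satisfies $\Delta\pi^\varepsilon=\div\div(\Seps-\boldF-\ueps\otimes\ueps)$ distributionally inside $\Omega$. Once this harmonic decomposition is secured, the remainder of the proof is a routine combination of Riesz-transform continuity and the Rellich compact embedding.
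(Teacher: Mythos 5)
Your proposal is correct and follows essentially the same route as the paper: Calder\'on--Zygmund boundedness of $\div\div\mathcal{N}$ for $\pi^{\varepsilon,1}$ and $\pi^{\varepsilon,2}$, Rellich--Kondrachov for the precompactness of $\ueps\otimes\ueps$, and harmonicity of $\pi^{\varepsilon,3}$ in $\Omega$ (obtained by taking the divergence of the momentum equation and using the Newton-potential identity) combined with the uniform $L^s$ bound and interior estimates. Only a cosmetic remark: you state the convention $-\Delta\mathcal{N}(f)=f$ but then compute $\Delta\pi^{\varepsilon,1}=\div\div\Seps$, which presupposes $\Delta\mathcal{N}(f)=f$; with the consistent choice of sign the cancellation $\Delta(\pi^{\varepsilon,1}+\pi^{\varepsilon,2})=\Delta\pi^{\varepsilon}$ in $\Omega$ goes through exactly as you describe.
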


We want to find a sequence $\{\varepsilon_k\}_{k=1}^{+\infty}\subset (0,1)$ such that $\varepsilon_k\to0$ as $k\to+\infty$ so that sequences of functions $\{\boldS^{\varepsilon_k}\}_{k=1}^{+\infty}$, $\{\boldu^{\varepsilon_k}\}_{k=1}^{+\infty}$ and $\{\pi^{\varepsilon_k}\}_{k=1}^{+\infty}$ has some additional properties. To abbreviate the notation we will write $F^k$ for $F^{\varepsilon_k}$.

We start with extracting convergent subsequences.

\begin{Lemma}\label{lem:2s}
Let $s$ be given by \eqref{Def:s}. For any $N\subset(0,1)$, $0\in\partial N$ there is a sequence $\{\varepsilon_k\}_{k=1}^{+\infty}\subset N$ such that $\varepsilon_k\to0$ as $k\to+\infty$ and functions $\boldu\in W^{1,p}_{0,\div}(\Omega;\Rd)$, $\boldu^0\in L^p\left(\Omega;W^{1,p}_{per}(Y)^d\right)$, $\overline{\boldS^0}\in L^{p'}(\Omega\times Y;\Rdsym)$, $\overline\pi\in L^s(\Omega\times Y)$ and $\overline{\pi^1}\in L^{p'}(\Omega\times Y)$ such that as $k\to+\infty$ 
	\begin{alignat}{2}
		\boldD\uk&\WTSCon \boldD\boldu+\boldD_y\boldu^0 &&\text{ in }L^p(\Omega\times Y;\Rdsym),\label{WTSConGrU}\\
		\Sepsk&\WTSCon \overline{\boldS^0} &&\text{ in }L^{p'}(\Omega\times Y;\Rdsym),\label{WTSConS}\\
		\pi^{k}&\WTSCon \overline{\pi} &&\text{ in }L^{s}(\Omega\times Y),\label{WTSConP}\\
		\pi^{k,1}&\WTSCon \overline{\pi^1} &&\text{ in }L^{p'}(\Omega\times Y),\label{WTSConP1}
	\end{alignat}
and
	\begin{equation}\label{WConvcs}
		\begin{alignedat}{2}
			\uk &\rightharpoonup \boldu &&\text{ in }W^{1,p}_0(\Omega;\Rd),\\
			\uk &\rightarrow \boldu &&\text{ in }L^{p^*}(\Omega;\Rd),\\
			\pi^k &\rightharpoonup \pi:=\int_Y\overline{\pi}(\cdot,y)\dy &&\text{ in }L^s(\Omega),\\
		\pi^{k,1}&\rightharpoonup \pi^1:=\int_Y\overline{\pi^1}(\cdot,y)\dy&&\text{ in }L^{p'}(\Omega),\\
			\Sepsk &\rightharpoonup \overline{\boldS}:=\int_Y\overline{\boldS^0}(\cdot,y)\dy &&\text{ in }L^{p'}(\Omega;\Rdsym).
		\end{alignedat}
	\end{equation}

	Moreover, the limit functions satisfy 
	\begin{align}
			&\text{for almost all }x\in \Omega:\ \boldD_y\boldu^0(x,\cdot)\in G(Y),\label{VInG}\\
			&\text{for almost all }x\in \Omega:\ \overline{\boldS^0}(x,\cdot)\in G^\bot(Y),\label{sInGB}\\
			&\text{for almost all }x\in \Omega:\ \overline{\pi^1}(x,\cdot)\boldI\in G^\bot(Y),\label{P1InGB}
	\end{align}	
	and for any $\boldw\in C^\infty_{0}(\Omega)$
	\begin{equation}\label{WFLim}
		\int_\Omega (\overline{\boldS}-\boldu\otimes \boldu-\pi\boldI):\boldD\boldw =\int_\Omega \boldF:\boldD\boldw.
	\end{equation}

	\begin{proof} In the proof we subsequently extract a subsequences. We will not explicitely refer to this fact.

The convergences in \eqref{WConvcs} follow in a standard way from \eqref{AprEst}, \eqref{PressProp}, Sobolev embedding theorem. Validity of \eqref{WFLim} follows from \eqref{WConvcs} and \eqref{StacWF}.

	Convergence \eqref{WConvcs}$_1$ and Lemma~\ref{Lem:Facts2S}~$(iv)$ imply \eqref{WTSConGrU}. Statements \eqref{WTSConS}, \eqref{WTSConP}, \eqref{WTSConP1} and identification of weak limits in \eqref{WConvcs} follow from \eqref{AprEst}, \eqref{PressProp}$_1$, the assumption on growth of $\boldS$ and Lemma~\ref{Lem:Facts2S}~$(ii)$ and $(iii)$.

Let us show \eqref{VInG}. The convergence \eqref{WTSConGrU} means that for any $\bpsi\in \mathcal{D}\left(\Omega;C^\infty_{per}(Y)^{d\times d}\right)$
\begin{equation}\label{WTSCGrUeps}
	\lim_{k\rightarrow +\infty}\int_{\Omega}\boldD\uk(x):\bpsi\left(x,\frac{x}{\varepsilon_k}\right)\dx=\int_{\Omega}\int_Y\left(\boldD\boldu(x)+\boldD_y\boldu^0(x,y)\right):\bpsi(x,y)\dx\dy.
\end{equation}
We pick $a\in \mathcal{D}(\Omega), b\in C^\infty_{per}(Y)$ and put $\bpsi(x,y)=a(x)b(y)\boldI$ in \eqref{WTSCGrUeps}. Obviously, we get using the weak convergence of $\{\uk\}$ in $W^{1,p}_0(\Omega;\Rd)$
\begin{equation*}
\begin{split}
	0&=\lim_{k\rightarrow +\infty}\int_{\Omega}\div\uk(x)a(x)b\left(\frac{x}{\varepsilon_k}\right)\dx=\lim_{k\rightarrow +\infty}\int_\Omega\boldD\uk(x)a(x)b\left(\frac{x}{\varepsilon_k}\right):\boldI\dx\\&=\int_{\Omega}\int_Y\left(\boldD\boldu(x)+\boldD_y\boldu^0(x,y)\right)a(x)b(y):\boldI\dy\dx\\&=\int_{\Omega}\div\boldu(x)a(x)\int_Yb(y)\dy\dx+\int_\Omega\int_Y\div_y\boldu^0(x,y)b(y)\dy\ a(x)\dx\\&=\int_\Omega\int_Y\div_y\boldu^0(x,y)b(y)\dy\ a(x)\dx.
	\end{split}
\end{equation*}
Hence for a.a. $x\in\Omega$ $\div_y\boldu^0(x,\cdot)=0$ a.e. in $Y$, i.e. we conclude \eqref{VInG}.

We show that for any $\sigma\in C^\infty_0(\Omega)$ and $\boldh\in C^\infty_{per,\div}(Y;\Rd)$
\begin{equation}\label{SOrt}
	\int_\Omega\int_Y \overline{\boldS^0}(x,y):\boldD\boldh(y)\dy\sigma(x)\dx=0.
\end{equation}
	Since $\varepsilon_k\sigma(x)\boldh\left(\frac{x}{\varepsilon_k}\right)$ is not solenoidal, the correction $\bk(x)=\mathcal{B}\left(\varepsilon_k \boldh\left(\frac{x}{\varepsilon_k}\right)\nabla\sigma(x)\right)$ that satisfies
	\begin{equation*}
		\begin{split}
			\div \bk(x)&=\varepsilon_k \boldh(\frac{x}{\varepsilon_k})\nabla \sigma(x)\text{ in }x\in\Omega, \quad\bk=0\text{ on }\partial\Omega,\\
			\|\nabla \bk\|_{L^\gamma(\Omega)}&\leq c(\gamma,\|\boldh\|_{L^\infty(Y)},\|\nabla\sigma\|_{L^\infty(\Omega)})\varepsilon_k
		\end{split}
	\end{equation*}
with an arbitrary $\gamma\in(1,\infty)$, is introduced to allow using $\varepsilon_k\sigma(x)\boldh\left(\frac{x}{\varepsilon_k}\right)-\bk$ as a test function in \eqref{StacWF}. We note that the existence of $\bk$ is ensured by Lemma~\ref{Lem:BogOp}. Then we employ convergences as $k\to+\infty$
\begin{alignat*}{2}
	\bk&\rightarrow 0&&\text{ in }L^\gamma(\Omega;\Rd),\\
	\boldD\bk&\rightarrow 0 &&\text{ in }L^\gamma(\Omega;\Rdsym),\\
	\sigma\uk\otimes\uk &\rightarrow\sigma\boldu\otimes\boldu &&\text{ in }L^1(\Omega;\Rdsym),\\
	\Sepsk&\WTSCon \overline{\boldS^0}&&\text{ in }L^{p'}(\Omega;\Rdsym),
\end{alignat*}
to obtain from \eqref{StacWF} by \eqref{NgWC} and Lemma~\ref{Lem:Facts2S}$_1$ that 
\begin{equation*}
	\int_\Omega\int_Y (\overline{\boldS^0}(x,y)-\boldu(x)\otimes \boldu(x)-\boldF(x)):\boldD_y \boldh(y)\dy\sigma(x)\dx=0.
\end{equation*}
Hence \eqref{SOrt} and thus \eqref{sInGB} follow due to an obvious fact $\int_Y\boldD_y\boldh(y)=0$.

	Finally, we infer that for any $\boldW=\boldD\boldw\in G(Y)$ and almost all $x\in\Omega$
		\begin{equation*}
			\int_Y \overline{\pi^1}(x,y)\boldI:\boldW(y)\dy=\int_Y \overline{\pi^1}(x,y)\div_y \boldw(y)\dy=0,
		\end{equation*}
		which concludes \eqref{P1InGB}.
	\end{proof}
\end{Lemma}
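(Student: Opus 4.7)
The strategy is to extract a common subsequence along which all weak and weak two-scale limits exist, then identify the structural constraints \eqref{VInG}--\eqref{P1InGB} by testing \eqref{StacWF} with suitably chosen oscillating functions.

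\emph{Step 1: Compactness and easy limits.} The uniform estimates \eqref{AprEst} together with \ref{SCoercGrow} bound $\{\uk\}$ in $W^{1,p}_0(\Omega;\Rd)$, $\{\Sepsk\}$ in $L^{p'}(\Omega;\Rdsym)$ and $\{\pi^k\}$ in $L^s(\Omega)$, while Lemma~\ref{Lem:PressComp} bounds $\{\pi^{k,1}\}$ in $L^{p'}(\Rd)$. Iteratively applying Lemma~\ref{Lem:Facts2S}(iii) to sequences in $N$ produces a single sequence $\varepsilon_k\to 0$ on which \eqref{WTSConS}--\eqref{WTSConP1} hold, and Lemma~\ref{Lem:Facts2S}(iv) furnishes $\boldu^0$ and \eqref{WTSConGrU} (after taking the symmetric part) together with strong convergence $\uk\to\boldu$ in $L^{p^*}(\Omega;\Rd)$. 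The weak $L^r(\Omega)$-limits displayed in \eqref{WConvcs} are identified as $y$-averages via Lemma~\ref{Lem:Facts2S}(ii). Passing to the limit in \eqref{StacWF} with a fixed $\boldw \in C^\infty_0(\Omega;\Rd)$, using strong convergence of $\uk\otimes\uk$ in $L^1$ (which is where the hypothesis $p>2d/(d+2)$ enters), yields \eqref{WFLim}.

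\emph{Step 2: The easy structural constraints \eqref{VInG} and \eqref{P1InGB}.} For \eqref{VInG} I would choose $\bpsi(x,y)=a(x)b(y)\boldI$ with $a\in C^\infty_0(\Omega)$ and $b\in C^\infty_{per}(Y)$ and test \eqref{WTSConGrU} in its Nguetseng formulation. The left-hand side vanishes identically because $\div\uk=0$, while on the right the $\boldD\boldu(x):\boldI$ piece reduces to $\div\boldu(x)=0$; the remainder $\int_\Omega a(x)\int_Y \div_y\boldu^0(x,y)b(y)\dy\dx=0$ for all such $a,b$ forces $\div_y\boldu^0(x,\cdot)=0$ a.e. For \eqref{P1InGB} it suffices to note that any $\boldw\in W^{1,p}_{per}(Y;\Rd)$ with $\div_y\boldw=0$ satisfies $\boldI:\boldD\boldw=\div_y\boldw=0$ pointwise, so $\int_Y\overline{\pi^1}(x,y)\,\boldI:\boldD\boldw(y)\dy=0$ trivially.

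\emph{Step 3: The main obstacle \eqref{sInGB}.} Fix $\sigma\in C^\infty_0(\Omega)$ and $\boldh\in C^\infty_{per,\div}(Y;\Rd)$. The naive oscillating test $\varepsilon_k\sigma(x)\boldh(x/\varepsilon_k)$ fails to be divergence-free (its divergence equals $\varepsilon_k\boldh(x/\varepsilon_k)\cdot\nabla\sigma(x)$), so it cannot be paired against the pressure without a correction. I would repair it by subtracting the Bogovskii correction $\bk=\mathcal{B}(\varepsilon_k\boldh(x/\varepsilon_k)\cdot\nabla\sigma(x))$ from Lemma~\ref{Lem:BogOp}, which satisfies $\|\nabla\bk\|_{L^\gamma(\Omega)}=O(\varepsilon_k)$ for every $\gamma\in(1,\infty)$. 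Inserting $\varepsilon_k\sigma(x)\boldh(x/\varepsilon_k)-\bk$ into \eqref{StacWF}, the dominant piece after differentiation is $\sigma(x)\boldD_y\boldh(x/\varepsilon_k)$; the remaining $\varepsilon_k\nabla\sigma\otimes\boldh(x/\varepsilon_k)$ part and the Bogovskii correction are $O(\varepsilon_k)$ in strong topologies, which kills the convective and $\boldF$ contributions in the limit (the pressure term vanishes by design since the test function is divergence-free). Applying Lemma~\ref{Lem:Facts2S}(v) with the strongly two-scale convergent multiplier $\sigma(x)\boldD_y\boldh(x/\varepsilon_k)$ (cf.\ Lemma~\ref{Lem:Facts2S}(i)) yields $\int_\Omega\sigma(x)\int_Y\overline{\boldS^0}(x,y):\boldD_y\boldh(y)\dy\dx=0$. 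Arbitrariness of $\sigma$ and $\boldh$, combined with the density of $\{\boldD\boldw:\boldw\in C^\infty_{per,\div}(Y;\Rd)\}$ in $G(Y)$ noted after the definition of $G(Y)$, then establishes \eqref{sInGB}.
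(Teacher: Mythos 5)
Your proposal follows essentially the same route as the paper: compactness plus Lemma~\ref{Lem:Facts2S} for the convergences \eqref{WTSConGrU}--\eqref{WConvcs}, the test $a(x)b(y)\boldI$ for \eqref{VInG}, the pointwise identity $\boldI:\boldD\boldw=\div_y\boldw$ for \eqref{P1InGB}, and the Bogovskii-corrected oscillating test function $\varepsilon_k\sigma(x)\boldh(x/\varepsilon_k)-\bk$ for \eqref{sInGB}. One point in Step 3 is misstated, though: the convective term and $\boldF$ do not pair only with the $O(\varepsilon_k)$ pieces of the test gradient; they also pair with the dominant part $\sigma(x)\boldD_y\boldh(x/\varepsilon_k)$, which is bounded but not small. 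In the limit these pairings produce $\int_\Omega\int_Y(\boldu\otimes\boldu+\boldF):\boldD_y\boldh(y)\,\sigma(x)\dy\dx$, and this vanishes not by smallness but because $\boldu\otimes\boldu$ and $\boldF$ are independent of $y$ while $\int_Y\boldD_y\boldh(y)\dy=0$ --- exactly the observation the paper uses to pass from its limit identity to \eqref{SOrt}. With that one-line correction your argument is complete and coincides with the paper's.
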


Further we also utilize the following lemma concerning the equiintegrability property of sequences $\{\boldS^k\}_{k=1}^\infty$, $\{\pi^{k,1}\}_{k=1}^\infty$.
\begin{Lemma}\label{Lem:Equiint}
For any $N\subset(0,1)$, $0\in\partial N$ there is a sequence $\{\varepsilon_k\}_{k=1}^{+\infty}\subset N$ such that $\varepsilon_k\to0$ as $k\to+\infty$ and a sequence of measurable sets $\Omega_1\subset\Omega_2\subset\cdots\subset\Omega_n\subset\cdots\subset\Omega$ with $|\Omega\setminus\Omega_n|\rightarrow 0$ as $n\to +\infty$ such that for any $n\in\eN$ and $\theta>0$ there is $\delta>0$ such that for any $k\in\eN$ and $K\subset\Omega_n$ with $|K|<\delta$
\begin{equation}\label{NormSmall}
\|\boldS^k\|_{L^{p'}(K)}+\|\pi^{k,1}\|_{L^{p'}(K)}<2\theta^\frac{1}{p'}.
\end{equation}
\begin{proof}
	Let us consider an arbitrary sequence $\{\varepsilon_k\}_{k=1}^{+\infty}\subset N$, $\varepsilon_k\to0$ as $k\to+\infty$ and denote $G^k=|\boldS^k|^{p'}+|\pi^{k,1}|^{p'}$. The apriori estimate \eqref{AprEst}$_1$, the growth condition on $\boldS$ and \eqref{PressProp}$_1$ imply the boundedness of $\{G^k\}_{k=1}^\infty$ in $L^{1}(\Omega)$. The application of Chacon's biting lemma \ref{Lem:ChacBit} on $\{G^k\}_{k=1}^\infty$ yields the existence of sets $\Omega_n\subset\Omega$ with $|\Omega\setminus\Omega_n|\rightarrow 0$ as $n\rightarrow\infty$ and the existence of a subsequence $\{G^k\}_{k=1}^\infty$ (that will not be relabeled) and a function $G\in L^1(\Omega)$ such that $G^k\rightharpoonup G$ in $L^1(\Omega_n)$ as $k\rightarrow\infty$. According to Dunford theorem \ref{Thm:Dun} we obtain the equiintegrability of $\{G^k\}_{k=1}^\infty$ on $\Omega_n$, i.e., for any $\theta>0$ there is $\delta>0$ such that for any $k\in\eN$ and $K\subset\Omega_n$ with $|K|<\delta$ we have
	\begin{equation*}
		\int_K|\boldS^k|^p+|\pi^{k,1}|^p<\theta,
	\end{equation*}
	which implies \eqref{NormSmall}.
\end{proof}
\end{Lemma}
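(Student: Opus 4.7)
The plan is to combine the uniform apriori estimates from Lemma~\ref{Thm:Exist} and Lemma~\ref{Lem:PressComp} with the Biting Lemma~\ref{Lem:ChacBit} and Dunford's theorem~\ref{Thm:Dun}. First, I would fix any sequence $\{\varepsilon_k\}\subset N$ with $\varepsilon_k\to 0$ and introduce the scalar sequence $G^k:=|\boldS^k|^{p'}+|\pi^{k,1}|^{p'}$. The growth condition \ref{SCoercGrow} applied to $\boldS^k=\boldS(\cdot/\varepsilon_k,\boldD\uk)$ together with the bound \eqref{AprEst}$_1$ controls $\|\boldS^k\|_{L^{p'}(\Omega)}^{p'}$, while \eqref{PressProp}$_1$ directly controls $\|\pi^{k,1}\|_{L^{p'}(\mR^d)}^{p'}$. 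Hence $\{G^k\}$ is bounded in $L^1(\Omega)$ uniformly in~$k$.

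Next, I would apply the Biting Lemma \ref{Lem:ChacBit} to $\{G^k\}$, which extracts a further subsequence (still indexed by $k$), a limit $G\in L^1(\Omega)$, and a nested collection of exceptional sets $E_1\supseteq E_2\supseteq\cdots$ with $|E_n|\to 0$ such that $G^k\rightharpoonup G$ in $L^1(\Omega\setminus E_n)$ for every $n$. Setting $\Omega_n:=\Omega\setminus E_n$ yields the required ascending sequence $\Omega_1\subset\Omega_2\subset\cdots$ with $|\Omega\setminus\Omega_n|\to 0$.

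For each fixed~$n$, the set $\{G^k|_{\Omega_n}\}_{k=1}^\infty$ is relatively weakly compact in $L^1(\Omega_n)$ (it is weakly convergent), so by Dunford's theorem \ref{Thm:Dun} it is uniformly integrable on $\Omega_n$: given $\theta>0$, there exists $\delta>0$ such that every measurable $K\subset\Omega_n$ with $|K|<\delta$ satisfies $\int_K G^k<\theta$ for all $k$. Since $|\boldS^k|^{p'}\leq G^k$ and $|\pi^{k,1}|^{p'}\leq G^k$, taking $p'$-th roots and using $(a+b)\leq 2\max\{a,b\}$ produces $\|\boldS^k\|_{L^{p'}(K)}+\|\pi^{k,1}\|_{L^{p'}(K)}<2\theta^{1/p'}$, which is exactly \eqref{NormSmall}.

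There is no real obstacle here; the statement is essentially a packaging of the Biting Lemma plus uniform integrability of the dominating $L^1$ sequence~$G^k$. The only point requiring a bit of care is that the same subsequence $\{\varepsilon_k\}$ and the same nested family $\{\Omega_n\}$ must work for all $n$ simultaneously, but this is guaranteed by the single application of Lemma~\ref{Lem:ChacBit}, which produces the subsequence and the nested sets at once, independently of $n$.
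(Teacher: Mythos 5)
Your proposal is correct and follows essentially the same route as the paper: bound $G^k=|\boldS^k|^{p'}+|\pi^{k,1}|^{p'}$ in $L^1(\Omega)$ via \eqref{AprEst}$_1$, the growth condition and \eqref{PressProp}$_1$, apply the Biting Lemma~\ref{Lem:ChacBit} to obtain the sets $\Omega_n$ and a biting-convergent subsequence, and invoke Dunford's theorem~\ref{Thm:Dun} for the uniform integrability on each $\Omega_n$. Your final step spelling out how $\int_K G^k<\theta$ yields \eqref{NormSmall} is just a more explicit version of what the paper leaves implicit.
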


From now on we assume that a sequence $\{\varepsilon_k\}_{k=1}^{+\infty}\subset N$, $\varepsilon_k\to0$ as $k\to+\infty$ is chosen in such a way that all conclusion of Lemmas~\ref{Lem:PressComp}, \ref{lem:2s} and \ref{Lem:Equiint} hold. In particular we fix the corresponding sequence $\{\Omega_n\}_{n=1}^{+\infty}$ of sets from Lemma~\ref{Lem:Equiint}. The rest of the paper is devoted to finding for this particular sequence the relation between $\overline{\boldS}$ and $\boldD\boldu$. 


In the following lemma we construct for any element of a subsequence of $\{\boldu^k\}_{k=1}^{+\infty}$ a sequence $\{\boldu^{k,\lambda}\}_{\lambda=1}^{+\infty}$, whose elements have the symmetric gradient bounded in $L^\infty(\Omega)$ independently of $k$. Hence a subsequence that converges weakly-star in $L^\infty(\Omega)$ can be selected. Moreover, limit functions form a sequence that contains a weakly convergent subsequence in $L^p(\Omega)$.
\begin{Lemma}\label{Lem:LipsTr}
\def\uepsl{\mathbf{u}^{k,\lambda}}
There is $c>0$ and a subsequence of $\{\varepsilon_k\}_{k=1}^{+\infty}$ (that will not be relabeled) such that 
	\begin{gather}
		\forall k,\lambda\in\eN:\|\boldD\uepsl\|_{L^p(\Omega)}\leq c,\label{PEpsLAmSymGrEst}\\
		\forall \lambda\in\eN:\boldD \uepsl \xrightharpoonup{}^* \boldD\boldu^\lambda\text{ as $k\to+\infty$  in }L^{\infty}(\Omega;\Rd),\label{LIWSConv}
        \end{gather}
where we denoted by $\uepsl$ functions constructed to $\uk$ by Lemma~\ref{Lem:LTr}.
Moreover, a subsequence $\{\boldu^{\lambda_l}\}_{l=1}^{+\infty}$ can be selected such that
	\begin{equation}
		\boldD \boldu^{\lambda_k} \rightharpoonup\boldD\boldu\text{ as $k\to+\infty$ in }L^p(\Omega;\Rdsym),\label{WCTr}.
	\end{equation}
\begin{proof}
	The application of Lemma~\ref{Lem:LTr} to the sequence $\{\uk\}$ yields the existence of functions $\uepsl\in W^{1,\infty}(\Omega;\Rd)$, $k,\lambda\in\eN$ satisfying
	\begin{equation}\label{SeqTrPr}
			\|\uepsl\|_{W^{1,\infty}(\Omega)}\leq\lambda,\quad
			|\{x\in\Omega:\uk(x)\neq\uepsl(x)\}|\leq c\frac{\|\uk\|_{W^{1,p}(\Omega)}^p}{\lambda^p}.
	\end{equation}
	Utilizing \eqref{SeqTrPr}, Friedrichs and Korn's inequalities, we obtain
	\begin{equation*}
		\begin{split}
		\int_{\Omega}|\boldD\uepsl|^p&=\int_{\{\uk=\uepsl\}}|\boldD\uepsl|^p+\int_{\{\uk\neq\uepsl\}}|\boldD\uepsl|^p\\
		&\leq\int_{\Omega}|\boldD\uk|^p+\lambda^p|\{x\in\Omega:\uk(x)\neq\uepsl(x)\}|\\
		&\leq c\|\boldD\uk\|_{L^p(\Omega)}^p,
		\end{split}
	\end{equation*}
	which implies \eqref{PEpsLAmSymGrEst} due to \eqref{AprEst}. 
	
	The convergence \eqref{LIWSConv} follows from \eqref{SeqTrPr}$_1$ by a diagonal procedure. Moreover, the estimate \eqref{PEpsLAmSymGrEst}, \eqref{LIWSConv} and the weak lower semicontinuity of the $L^p-$norm imply the existence of a positive constant $c$ such that
	\begin{equation*}
	\forall\lambda\in\eN:	\|\boldD\boldu^\lambda\|_{L^p(\Omega)}\leq c.
	\end{equation*}
	Hence we can pick a function $\tilde{\boldu}\in W^{1,p}_0(\Omega;\Rd)$ and a subsequence $\{\lambda_l\}_{l=1}^{+\infty}$ such that 
	\begin{equation*}
		\boldu^{\lambda_l}\rightharpoonup \tilde{\boldu}\text{ as $l\to+\infty$ in }W^{1,p}(\Omega;\Rd).
	\end{equation*}
	It remains to show $\tilde{\boldu}=\boldu$. Using the boundedness of the sequences $\{\uk\},\{\uepsl\}$ in $W^{1,p}(\Omega;\Rd)$ and the estimate \eqref{SeqTrPr}$_2$, we obtain
	\begin{equation*}
		\int_{\Omega}|\uepsl-\uk|=\int_{\{\uepsl\neq\ueps\}}|\uepsl-\uk|\leq\|\uepsl-\uk\|_{L^p(\Omega)}|\{\uepsl\neq\uk\}|^\frac{1}{p'}\leq \frac{c}{\lambda^{p-1}}.
	\end{equation*}
	Moreover, the compact embedding $W^{1,p}(\Omega;\Rd){\hookrightarrow}L^1(\Omega;\Rd)$ implies
	\begin{equation*}
		\|\boldu^\lambda-\boldu\|_{L^1(\Omega)}=\lim_{k\rightarrow +\infty}\|\uepsl-\uk\|_{L^1(\Omega)}.
	\end{equation*}
	Therefore $\boldu^\lambda\rightarrow\boldu$ in $L^1(\Omega)$ and we conclude $\tilde{\boldu}=\boldu$ a.e. in $\Omega$. 
\end{proof}
\end{Lemma}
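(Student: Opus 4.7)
The plan is to apply Lemma~\ref{Lem:LTr} directly to each $\uk$ and then extract by a two-step compactness argument (first a diagonal procedure in $\lambda$, then a further subsequence in $\lambda$). Concretely, for every $k,\lambda\in\eN$ I set $\mathbf{u}^{k,\lambda}$ to be the Lipschitz truncation of $\uk$ at level $\lambda$ supplied by Lemma~\ref{Lem:LTr}, so that $\|\mathbf{u}^{k,\lambda}\|_{W^{1,\infty}(\Omega)}\le\lambda$ and $|\{\uk\neq\mathbf{u}^{k,\lambda}\}|\le c\lambda^{-p}\|\uk\|_{W^{1,p}(\Omega)}^{p}$. The bound \eqref{PEpsLAmSymGrEst} is then obtained by splitting $\intO|\boldD\mathbf{u}^{k,\lambda}|^p$ over the coincidence set $\{\uk=\mathbf{u}^{k,\lambda}\}$ (where the symmetric gradients agree almost everywhere, so the integrand equals $|\boldD\uk|^p$) and its complement (where $|\boldD\mathbf{u}^{k,\lambda}|\le\lambda$ and the measure decays like $\lambda^{-p}$). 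The two contributions combine to $\le c\|\boldD\uk\|_{L^p(\Omega)}^p$, which is uniform in $k$ by \eqref{AprEst}.

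For each fixed $\lambda$, the sequence $\{\mathbf{u}^{k,\lambda}\}_k$ is bounded in $W^{1,\infty}_0(\Omega;\Rd)$, so Banach--Alaoglu yields a weakly-$*$ convergent subsequence. Running this successively for $\lambda=1,2,\dots$ and taking a standard diagonal extraction, I obtain a single subsequence of $\{\varepsilon_k\}$ along which \eqref{LIWSConv} holds for every $\lambda$, with a limit denoted $\boldu^\lambda\in W^{1,\infty}_0(\Omega;\Rd)$. Weak-$*$ lower semicontinuity (tested against $L^{p'}$ functions) together with \eqref{PEpsLAmSymGrEst} then delivers a uniform bound $\|\boldD\boldu^\lambda\|_{L^p(\Omega)}\le c$, so a further subsequence $\{\boldu^{\lambda_l}\}$ converges weakly in $W^{1,p}_0(\Omega;\Rd)$ to some $\tilde\boldu$.

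The substantive step, and the one I expect to be the main obstacle, is the identification $\tilde\boldu=\boldu$. The idea is to quantify how close the truncation $\mathbf{u}^{k,\lambda}$ is to $\uk$ in a coarse norm uniformly in $k$. Applying Hölder on the bad set and using the measure estimate gives
\[
\|\mathbf{u}^{k,\lambda}-\uk\|_{L^1(\Omega)}\le\|\mathbf{u}^{k,\lambda}-\uk\|_{L^p(\Omega)}\,|\{\mathbf{u}^{k,\lambda}\neq\uk\}|^{1/p'}\le c\lambda^{-(p-1)},
\]
where the $L^p$ factor is controlled by \eqref{AprEst} and \eqref{PEpsLAmSymGrEst}. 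Passing $k\to+\infty$ via the compact embedding $W^{1,p}(\Omega;\Rd)\hookrightarrow L^1(\Omega;\Rd)$ (which forces both $\uk\to\boldu$ and $\mathbf{u}^{k,\lambda}\to\boldu^\lambda$ strongly in $L^1$) yields $\|\boldu^\lambda-\boldu\|_{L^1(\Omega)}\le c\lambda^{-(p-1)}$. Sending $\lambda\to+\infty$ along $\{\lambda_l\}$ shows $\boldu^{\lambda_l}\to\boldu$ in $L^1$, and combined with the weak $W^{1,p}$ convergence to $\tilde\boldu$ this pins down $\tilde\boldu=\boldu$, whence \eqref{WCTr}. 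The whole argument hinges on the decay rate $\lambda^{-p}$ of the bad set supplied by Lemma~\ref{Lem:LTr} being fast enough that a single Hölder application with exponent $p'$ leaves a strictly negative power of $\lambda$.
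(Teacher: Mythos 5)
Your proposal is correct and follows essentially the same route as the paper: the same splitting of $\int_\Omega|\boldD\mathbf{u}^{k,\lambda}|^p$ over the coincidence set and its complement, the same diagonal extraction, and the same identification of the limit via the Hölder estimate $\|\mathbf{u}^{k,\lambda}-\uk\|_{L^1}\le c\lambda^{-(p-1)}$ combined with the compact embedding $W^{1,p}\hookrightarrow L^1$. No discrepancies to report.
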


In the rest of the paper we denote for any $k,l\in\eN$ the function $\boldu^{k,l}:=\boldu^{\varepsilon_k,\lambda_l}$, where $\{\lambda_l\}$ and $\{\varepsilon_k\}$ are sequences constructed in Lemma~\ref{Lem:LipsTr}. Now, we are prepared to show that for certain subsets $\tilde\Omega_n$ of $\Omega$ we can identify $\lim_{k\rightarrow\infty}\int_{\tilde{\Omega}_n}\Sepsk:\boldD\uk\dx$.

\begin{Lemma}\label{Lem:LimIntProd}
\def\uepsl{\mathbf{u}^{\varepsilon,\lambda}}
	Let $O\Subset\Omega$ be arbitrary open and denote $\tilde{\Omega}_n=\Omega_n\cap O$. Then for each $n\in \eN$
	\begin{equation}\label{ProdWConv}
		\lim_{k\rightarrow +\infty}\int_{\tilde{\Omega}_n}\Sepsk:\boldD\uk=\int_{\tilde{\Omega}_n}\overline\boldS :\boldD\boldu.
	\end{equation}
	\begin{proof}
For fixed $n\in\eN$ and any $k,l\in\eN$ we decompose using the solenoidality of $\uk$ 
		\begin{equation*}
			\begin{split}
			\int_{\tilde{\Omega}_n}\Sepsk: \boldD\uk=&\int_{\tilde{\Omega}_n}(\Sepsk-\piepsok\boldI) :\boldD\uk=\int_{\tilde{\Omega}_n}(\Sepsk-\piepsok\boldI):\boldD\left(\uk-\uepskl\right)\\&+\int_{\tilde{\Omega}_n}(\Sepsk-\piepsok\boldI):\boldD\uepskl=I^{k,l}+II^{k,l}.
			\end{split}
		\end{equation*}

We want to perform the limit passage $k\to+\infty$ and then $l\to+\infty$ in both terms on the right hand side of the latter equality. We denote $\tilde{\Omega}_n^{k,l}=\tilde{\Omega}_n\cap\{\uk\neq\uepskl\}$ and estimate using H\"older's inequality, \eqref{PiEps1Est}, \eqref{AprEst}$_1$ and \eqref{PEpsLAmSymGrEst}
		\begin{equation*}
			\begin{split}
				|I^{k,l}|\leq c\|\Sepsk-\piepsok\boldI\|_{L^{p'}(\tilde{\Omega}_n^{k,l})}\|\boldD(\uk-\uepskl)\|_{L^{p}(\tilde{\Omega}_n^{k,l})}\leq c\left(\|\Sepsk\|_{L^{p'}(\tilde{\Omega}_n^{k,l})}+\|\piepsok\|_{L^{p'}(\tilde{\Omega}_n^{k,l})}\right).
			\end{split}
		\end{equation*}
		As $|\tilde\Omega_n^{k,l}|\leq c\lambda_l^{-p}$ by \eqref{SeqTrPr}$_2$, we get by Lemma~\ref{Lem:Equiint}
		that for any $\theta>0$ there exists $l_0\in\eN$ such that for any $l>l_0$ and $k\in\eN$ we have $|I^{k,l}|<\theta$ and therefore
		\begin{equation*}
			\lim_{l\rightarrow+\infty}\lim_{k\to+\infty}I^{k,l}=\lim_{k\to+\infty}\lim_{l\to+\infty}I^{k,l}=0.
		\end{equation*} 
Note that for this estimate it is essential that $\{\pi^{k,1}\}$ is bounded in $L^{p'}(\Omega)$. The terms $\pi^{k,2}$ and $\pi^{k,3}$ cannot be included to $I^{k,l}$.

		For the limit passage $k\to+\infty$ in $II^{k,l}$ we employ Lemma~\ref{Lem:DivCurl}. Let us pick $q\in\left(1,s\right)$, where $s$ is determined by \eqref{Def:s}. We have for any $\boldw\in W^{1,q'}_0(O;\Rd)$	in the sense of distributions
		\begin{equation}\label{eq:2cross}
		\left\langle \div(\Sepsk-\piepsok\boldI),\boldw\right\rangle=-\int_{O}\left(\boldF+\uk\otimes\uk+(\pi^{k,2}+\pi^{k,3})\boldI\right):\boldD\boldw.
		\end{equation}
		It follows from Lemma~\ref{Lem:PressComp} that $\{\boldF+\uk\otimes\uk+(\pi^{k,2}+\pi^{k,3})\boldI\}$ is precompact in $L^q(O;\Rdsym)$. Therefore we obtain that $\{\div(\Sepsk+\piepsok\boldI)\}$ is precompact in $W^{-1,q}(O;\Rd)$. Here it is necessary that part of the pressure corresponding to $\boldS^{k}$, i.e. $\pi^{k,1}$, that is not precompact in any Lebesgue space, does not appear on the right hand side of \eqref{eq:2cross}. We observe that $\curl(\nabla\uepskl)=0$. Then Lemma~\ref{Lem:DivCurl} and the convergences \eqref{WConvcs}$_{4,5}$ and \eqref{LIWSConv} imply
		\begin{equation*}
	(\Sepsk-\piepsok\boldI):\boldD\uepskl=			(\Sepsk-\piepsok\boldI):\nabla\uepskl
\rightharpoonup (\overline\boldS-\pi^1\boldI):\nabla\boldu^l=
(\overline\boldS-\pi^1\boldI):\boldD\boldu^l
		\end{equation*}
in $L^r(O)$ for any $r>1$ 
		as $k\to+\infty$. Hence we deduce using \eqref{WCTr} and the solenoidality of $\boldu$
		\begin{equation*}
\begin{aligned}
			\lim_{l\to+\infty}\lim_{k\to+\infty}II^{k,l}&=\lim_{l\to+\infty}\lim_{k\to+\infty}\int_{O} (\Sepsk-\piepsok\boldI):\boldD \uepskl\chi_{\tilde{\Omega}_n}\\&=\lim_{l\to+\infty}\int_{O} (\overline\boldS-\pi^1\boldI):\boldD \boldu^l\chi_{\tilde\Omega_n}=\int_{\tilde{\Omega}_n} \overline{\boldS}:\boldD \boldu.
                      \end{aligned}
                    \end{equation*}
	\end{proof}
\end{Lemma}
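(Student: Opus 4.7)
The plan is to circumvent the forbidden Minty-style testing of the limiting equation against $\boldu$ --- forbidden because $\boldu \otimes \boldu$ need not lie in $L^{p'}$ when $p > 2d/(d+2)$ --- by combining the Lipschitz truncation $\boldu^{k,l}$ of Lemma~\ref{Lem:LipsTr} with the pressure decomposition of Lemma~\ref{Lem:PressComp}. Since $\boldu^k$ is solenoidal, I would first rewrite
\begin{equation*}
\int_{\tilde{\Omega}_n} \boldS^k : \boldD \boldu^k = \int_{\tilde{\Omega}_n} (\boldS^k - \pi^{k,1} \boldI) : \boldD \boldu^k = I^{k,l} + II^{k,l},
\end{equation*}
where $I^{k,l}$ carries $\boldD(\boldu^k - \boldu^{k,l})$ and $II^{k,l}$ carries $\boldD \boldu^{k,l}$, and then take the iterated limit $\lim_{l \to \infty} \lim_{k \to \infty}$ of each piece separately.

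For $I^{k,l}$ the integrand is supported on $\{\boldu^k \neq \boldu^{k,l}\} \cap \tilde{\Omega}_n$, whose measure is bounded by $c \lambda_l^{-p}$ via \eqref{SeqTrPr}. H\"older's inequality together with the uniform $L^p$ bounds on $\boldD \boldu^k$ and $\boldD \boldu^{k,l}$ from \eqref{AprEst} and \eqref{PEpsLAmSymGrEst} reduces the task to controlling $\|\boldS^k - \pi^{k,1} \boldI\|_{L^{p'}}$ on a small set. The equiintegrability of $|\boldS^k|^{p'} + |\pi^{k,1}|^{p'}$ on $\Omega_n$ furnished by Lemma~\ref{Lem:Equiint} then gives $\lim_{l \to \infty} \sup_k |I^{k,l}| = 0$; this is exactly why the truncated domain $\tilde{\Omega}_n$ was introduced in the first place.

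For $II^{k,l}$ I would invoke the div-curl Lemma~\ref{Lem:DivCurl} on $O$ with the pair $(\boldS^k - \pi^{k,1} \boldI, \, \nabla \boldu^{k,l})$. The curl of $\nabla \boldu^{k,l}$ vanishes trivially, while the momentum equation \eqref{StacWF} rearranges to
\begin{equation*}
\div(\boldS^k - \pi^{k,1} \boldI) = -\div \bigl( \boldF + \boldu^k \otimes \boldu^k + (\pi^{k,2} + \pi^{k,3}) \boldI \bigr).
\end{equation*}
By Lemma~\ref{Lem:PressComp} together with the Rellich--Kondrachov compactness $\boldu^k \to \boldu$ in $L^{p^*}$, the right-hand side is precompact in some $L^q(O)$, hence $\div(\boldS^k - \pi^{k,1} \boldI)$ is precompact in an appropriate negative Sobolev space. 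Combined with $\boldS^k \rightharpoonup \overline{\boldS}$, $\pi^{k,1} \rightharpoonup \pi^1$ in $L^{p'}$ and the weak-$*$ convergence of $\nabla \boldu^{k,l}$ to $\nabla \boldu^l$ in $L^\infty$, Lemma~\ref{Lem:DivCurl} yields $(\boldS^k - \pi^{k,1} \boldI) : \nabla \boldu^{k,l} \rightharpoonup (\overline{\boldS} - \pi^1 \boldI) : \nabla \boldu^l$ in some $L^r$. Integrating over $\tilde{\Omega}_n$ and letting $l \to \infty$ through \eqref{WCTr}, with $\div \boldu = 0$ removing the $\pi^1 \boldI$ contribution, completes the argument.

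The main obstacle I anticipate is the pressure bookkeeping: $\pi^{k,1}$ enjoys no compactness beyond its weak $L^{p'}$ limit and therefore must be grouped with $\boldS^k$ on the ``gradient'' side, where it gets paired with the $L^\infty$-bounded $\nabla \boldu^{k,l}$; conversely $\pi^{k,2}$ and $\pi^{k,3}$, which do converge strongly in weaker spaces, must remain on the divergence side so as to deliver the compactness hypothesis of Lemma~\ref{Lem:DivCurl}. Checking that the resulting exponents are admissible --- especially in the low-regularity regime in which $\boldu \otimes \boldu$ only lies in $L^{p^*/2}$ and the parameter $s$ from \eqref{Def:s} may fall below $p'$ --- is the delicate bookkeeping that makes the whole splitting work.
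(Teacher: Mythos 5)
Your proposal is correct and follows essentially the same route as the paper: the identical splitting $I^{k,l}+II^{k,l}$ after subtracting $\pi^{k,1}\boldI$, the equiintegrability from Lemma~\ref{Lem:Equiint} on $\tilde\Omega_n$ to kill $I^{k,l}$, the div--curl lemma applied to the pair $(\boldS^k-\pi^{k,1}\boldI,\nabla\boldu^{k,l})$ for $II^{k,l}$, and the final passage $l\to+\infty$ via \eqref{WCTr} with solenoidality of $\boldu$ removing the $\pi^1$ term. Even your closing remark on the pressure bookkeeping mirrors the paper's own commentary on why $\pi^{k,1}$ must stay with $\boldS^k$ while $\pi^{k,2},\pi^{k,3}$ supply the compactness on the divergence side.
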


Having all preliminary claims shown we justify the limit passage $\varepsilon\rightarrow 0$ in the weak formulation of \eqref{ProbEps}.
\begin{proof}[Proof of Theorem~\ref{Thm:Homog}]
\def\boldU{\mathbf{U}}
	It remains to show the relation 
	\begin{equation}\label{eq:slovak}
	\overline{\boldS^0}(x,y)=\boldS(x,\boldD\boldu(x)+\boldD_y\boldu^0(x,y))\text{ for  almost all }x\in\Omega, y\in Y.
	\end{equation}
This equality namely immediately implies that $\boldD_y\boldu^0(x,\cdot)$ is the solution of the cell problem \eqref{CProb} with $\boldxi=\boldD\boldu(x)$ for a.a. $x\in\Omega$ by \eqref{VInG} and \eqref{sInGB}. Consequently, integrating \eqref{eq:slovak} over $Y$ we obtain $\overline{\boldS}(x)=\int_Y\boldS(x,\boldD\boldu+\boldD_y\boldu^0)\dy=\hat{\boldS}(\boldD\boldu)$ and \eqref{ProbHom} holds.

Finally, we prove \eqref{eq:slovak}.	We fix $n\in\eN$, a corresponding $\Omega_n$ from Lemma~\ref{Lem:Equiint} and $O\Subset\Omega$. Keeping the notation of Lemma~\ref{Lem:LimIntProd}, using \eqref{WConvcs}, \eqref{VInG} and \eqref{sInGB}, it follows from \eqref{ProdWConv} that 
\begin{equation}\label{LimProd2S}
	\lim_{k\to+\infty}\int_{\tilde\Omega_n} \Sepsk : \boldD\uk=\int_{\tilde\Omega_n}\int_Y \overline{\boldS^0}:(\boldD\boldu+\boldD_y\boldu^0).
\end{equation}
We choose $\boldU\in L^p(\tilde\Omega_n;C_{per}(Y;\Rdsym))$. The monotonicity of $\boldS$ implies
	\begin{equation*}
		\begin{split}
		0\leq& \int_{\tilde\Omega_n}\left(\Sepsk(x)-\boldS\left({x}{\varepsilon_k^{-1}},\boldU\left(x,{x}{\varepsilon_k^{-1}}\right)\right)\right):\left(\boldD\uk(x)-\boldU\left(x,{x}{\varepsilon_k^{-1}}\right)\right)\dx\\=&\int_{\tilde\Omega_n}\Sepsk(x): \boldD\uk(x)\dx-\int_{\tilde\Omega_n}\boldS\left({x}{\varepsilon_k^{-1}},\boldU\left(x,{x}{\varepsilon_k^{-1}}\right)\right):\boldD\uk(x)\dx\\
		&-\int_{\tilde\Omega_n}\Sepsk(x):\boldU\left(x,{x}{\varepsilon_k^{-1}}\right)\dx+\int_{\tilde\Omega_n}\boldS\left({x}{\varepsilon_k^{-1}},\boldU\left(x,{x}{\varepsilon_k^{-1}}\right)\right):\boldU\left(x,{x}{\varepsilon_k^{-1}}\right)\dx\\=&I^k-II^k-III^k+IV^k.
		\end{split}
	\end{equation*}
	We want to pass to the limit as $k\to+\infty$ in $I^k,II^k,III^k,IV^k$. We use \eqref{LimProd2S} for the passage in $I^k$.
	Applying Lemma~\ref{Lem:Facts2S}~$(i)$ to $\boldS(y,\boldU(x,y))$ and $\boldU$  yields

	\begin{equation*}
		\begin{split}
			\boldS\left({x}{\varepsilon_k^{-1}},\boldU\left(x,{x}{\varepsilon_k^{-1}}\right)\right)&\STSCon \boldS(y,\boldU(x,y)) \text{ in }L^{p'}(\Omega\times Y;\Rdsym),\\
			\boldU\left(x,{x}{\varepsilon_k^{-1}}\right)&\STSCon\boldU(x,y)\text{ in }L^{p}(\Omega\times Y;\Rdsym)
		\end{split}
	\end{equation*}
 as $k\to+\infty$.	Employing these convergences and \eqref{WTSConGrU} we infer 
	\begin{equation*}
		\begin{split}
			&\lim_{k\to+\infty} II^k=\int_{\tilde\Omega_n}\int_Y \boldS(y,\boldU(x,y)):(\boldD\boldu(x)+\boldD_y\boldu^0(x,y)\dy\dx, \\
			&\lim_{k\to+\infty} III^k=\int_{\tilde\Omega_n}\int_Y \overline{\boldS^0}(x,y):\boldU(x,y)\dy\dx,\\
			&\lim_{k\to+\infty} IV^k=\int_{\tilde\Omega_n}\int_Y \boldS(y,\boldU(x,y)):\boldU(x,y)\dy\dx.
		\end{split}
	\end{equation*}
	Thus one obtains for any $n\in\eN$ and $\boldU\in L^p(\tilde\Omega_n;C_{per}(Y;\Rdsym))$
		\begin{equation}\label{MonotIneq}
		\int_{\tilde\Omega_n}\int_Y\left(\overline{\boldS^0}(x,y)-\boldS(y,\boldU(x,y))\right):\left(\boldD\boldu(x)+\boldD_y\boldu^0(x,y)-\boldU(x,y)\right)\dy\dx\geq 0.
	\end{equation}
To be able to apply Minty's trick, we need \eqref{MonotIneq} to be satisfied for any $\boldU\in L^p(\tilde\Omega_n\times Y;\Rdsym)$. In order to obtain that we consider $\boldU\in L^p(\tilde\Omega_n\times Y;\Rdsym)$ and $\{\boldU^k\}\subset L^p(\tilde\Omega_n;C_{per}(Y;\Rdsym))$ such that $\boldU^k\rightarrow \boldU$ in $L^p(\tilde\Omega_n\times Y;\Rdsym)$. Then we have due to the growth of $\boldS$ and theory of Nemytskii operators that $\boldS(y,\boldU^k)\rightarrow \boldS(y,\boldU)$ in $L^{p'}(\tilde\Omega_n\times Y;\Rdsym)$. Therefore one deduces the accomplishment of \eqref{MonotIneq} for any $\boldU\in L^p(\tilde\Omega_n\times Y;\Rdsym)$. Minty's trick yields that $\overline{\boldS^0}(x,y)=\boldS(y,\boldD\boldu(x)+\boldD_y\boldu^0(x,y))$ for almost all $(x,y)\in\tilde\Omega_n\times Y$. Since $|\Omega\setminus\Omega_n|\rightarrow 0$, $\{\Omega\setminus\Omega_n\}_{n=1}^{+\infty}$ is a decreasing sequence of measurable sets and $O\Subset\Omega$ was arbitrary, we have for almost all $(x,y)\in\Omega\times Y$ $\overline{\boldS^0}(x,y)=\boldS(y,\boldD\boldu(x)+\boldD_y\boldu^0(x,y))$. 
\end{proof}
Let us note that we have simultaneously proven the following lemma concerning the existence of a weak solution of the problem \eqref{ProbHom}.
\begin{Lemma}
		Let $\Omega\subset\Rd$ be a bounded Lipschitz domain, $\boldF\in L^{p'}(\Omega;\Rdsym)$ and $p>\frac{2d}{d+2}$, Assumption~\ref{Ass:S} be fulfilled and $s$ be determined by
\eqref{Def:s}. Then there exists a weak solution $(\boldu,\pi)$ of the problem \eqref{ProbHom}, which is a pair $(\boldu,\pi)\in W^{1,p}_{0,\div}(\Omega;\Rd)\times L^s(\Omega)$ such that
for any $\boldw\in C^\infty_0(\Omega;\Rd)$
	\begin{equation*}
		\int_\Omega (\hat{\boldS}(\boldD\boldu)-\boldu\otimes\boldu-\pi \boldI):\boldD\boldw = \int_\Omega \boldF:\boldD\boldw.
	\end{equation*}
\end{Lemma}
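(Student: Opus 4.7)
The plan is to recognize this statement as essentially a byproduct of everything that has already been carried out: the homogenized solution is obtained as the weak limit of a family of solutions to the oscillatory problems, so no new construction is required beyond assembling what is already proved.

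First, I would invoke Lemma~\ref{Thm:Exist} to construct, for every $\varepsilon\in(0,1)$, a weak solution $(\boldu^\varepsilon,\pi^\varepsilon)\in W^{1,p}_{0,\div}(\Omega;\Rd)\times L^s(\Omega)$ of the oscillatory system \eqref{ProbEps}, satisfying the $\varepsilon$-independent a priori bounds \eqref{AprEst}. This gives precisely the family $\{(\boldu^\varepsilon,\pi^\varepsilon)\}_{\varepsilon\in(0,1)}$ that serves as input to Theorem~\ref{Thm:Homog}. Applying that theorem to this family, one extracts a subsequence $\varepsilon_k\to0$ along which $\boldu^{\varepsilon_k}\rightharpoonup\boldu$ in $W^{1,p}_0(\Omega;\Rd)$ and $\pi^{\varepsilon_k}\rightharpoonup\pi$ in $L^s(\Omega)$, and, by its very conclusion, the limit pair $(\boldu,\pi)$ is a weak solution of the homogenized problem \eqref{ProbHom} with stress tensor $\hat{\boldS}$ defined in \eqref{HSDef}.

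The remaining verifications are immediate. The incompressibility constraint $\div\boldu=0$ passes to the limit under weak convergence in $W^{1,p}_0(\Omega;\Rd)$, so $\boldu\in W^{1,p}_{0,\div}(\Omega;\Rd)$; likewise $\int_\Omega\pi=0$ follows by testing the weak convergence $\pi^{\varepsilon_k}\rightharpoonup\pi$ in $L^s(\Omega)$ with the constant function $1\in L^{s'}(\Omega)$. The weak formulation is exactly \eqref{WFLim} combined with the identification $\overline{\boldS}=\hat{\boldS}(\boldD\boldu)$ established at the end of the proof of Theorem~\ref{Thm:Homog}. There is no genuine obstacle here, since the serious analytical work---uniform estimates, the three-part pressure decomposition of Lemma~\ref{Lem:PressComp}, the Lipschitz truncation of Lemma~\ref{Lem:LipsTr}, the div--curl argument of Lemma~\ref{Lem:LimIntProd} and Minty's trick in the two-scale setting---has already been performed, so the proof reduces to one or two lines of referencing.
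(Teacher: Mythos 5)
Your proposal is correct and matches the paper exactly: the paper states this lemma as having been ``simultaneously proven'' in the course of Theorem~\ref{Thm:Homog}, i.e., the existence of the family from Lemma~\ref{Thm:Exist}, the convergences \eqref{WConvcs}, the limit weak formulation \eqref{WFLim}, and the identification $\overline{\boldS}=\hat{\boldS}(\boldD\boldu)$ together yield the claim. Your additional remarks on passing $\div\boldu=0$ and $\int_\Omega\pi=0$ to the limit are the right (routine) closing verifications.
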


\section*{Acknowledgements}
M.~Bul\'{\i}\v{c}ek thanks the project GA\v{C}R16-03230S financed 
by Czech Science Foundation. M.~Bul\'{\i}\v{c}ek and P. Kaplick\'y are members of the Ne\v{c}as Center for
Mathematical Modeling. M. Kalousek was supported by the grant SVV-2016-
260335 and the project UNCE 204014.

\def\cprime{$'$} \def\ocirc#1{\ifmmode\setbox0=\hbox{$#1$}\dimen0=\ht0
  \advance\dimen0 by1pt\rlap{\hbox to\wd0{\hss\raise\dimen0
  \hbox{\hskip.2em$\scriptscriptstyle\circ$}\hss}}#1\else {\accent"17 #1}\fi}
  \def\ocirc#1{\ifmmode\setbox0=\hbox{$#1$}\dimen0=\ht0 \advance\dimen0
  by1pt\rlap{\hbox to\wd0{\hss\raise\dimen0
  \hbox{\hskip.2em$\scriptscriptstyle\circ$}\hss}}#1\else {\accent"17 #1}\fi}
  \def\ocirc#1{\ifmmode\setbox0=\hbox{$#1$}\dimen0=\ht0 \advance\dimen0
  by1pt\rlap{\hbox to\wd0{\hss\raise\dimen0
  \hbox{\hskip.2em$\scriptscriptstyle\circ$}\hss}}#1\else {\accent"17 #1}\fi}
\providecommand{\bysame}{\leavevmode\hbox to3em{\hrulefill}\thinspace}
\providecommand{\MR}{\relax\ifhmode\unskip\space\fi MR }
\providecommand{\MRhref}[2]{%
  \href{http://www.ams.org/mathscinet-getitem?mr=#1}{#2}
}
\providecommand{\href}[2]{#2}

\end{document}